\documentclass{amsart}
\usepackage{amsfonts,amssymb,amsmath,amsthm}
\usepackage{url}
\usepackage{enumerate}

\urlstyle{sf}
\newtheorem{theorem}{Theorem}[section]
\newtheorem{lemma}[theorem]{Lemma}

\theoremstyle{definition}
\newtheorem{definition}[theorem]{Definition}
\newtheorem{remark}[theorem]{Remark}
\numberwithin{equation}{section}
\begin{document}

\title[commutator of the Marcinkiewicz integral]
{Weighted complete continuity for the commutator of  Marcinkiewicz integral}
\author[G. Hu]{Guoen Hu}
\address{Guoen Hu, Department of  Applied Mathematics, Zhengzhou Information Science and Technology Institute\\
Zhengzhou 450001,
P. R. China}

\email{guoenxx@163.com}
\thanks{The research  was supported by the NNSF of
China under grant $\#$11371370}.

\keywords{commutator, Marcinkiewicz integral, completely continuous operator, Fourier transform,
weighted Morrey space.}
\subjclass{Primary 42B25, Secondary 47B07}

\begin{abstract}
Let $\Omega$ be homogeneous of degree zero, have mean value zero and integrable on the unit sphere, and $\mathcal{M}_{\Omega}$ be the higher-dimensional Marcinkiewicz
integral associated with $\Omega$. In this paper, the author considers the complete continuity on  weighted $L^p(\mathbb{R}^n)$ spaces with $A_p(\mathbb{R}^n)$ weights, weighted  Morrey spaces with $A_p(\mathbb{R}^n)$ weights, for the  commutator generated by ${\rm CMO}(\mathbb{R}^n)$ functions  and
$\mathcal{M}_{\Omega}$ when $\Omega$ satisfies certain size conditions.
\end{abstract}
\maketitle

\section{Introduction}
As an analogy of the classicial Littlewood-Paley $g$-function,
Marcinkiewicz \cite{mar} introduced the operator
$$\mathcal{M}(f)(x)=\Big(\int^{\pi}_0\frac{|F(x+t)-F(x-t)-2F(x)|^2}{t^3}\,{\rm
d}t\Big)^{\frac{1}{2}}, $$ where $F(x)=\int^x_0f(t){\rm d}t.$ This operator
is now called Marcinkiewicz integral. Zygmund \cite{zy} proved that
$\mathcal{M}$ is bounded on  $L^p([0,\,2\pi])$ for $p\in
(1,\,\infty)$. Stein \cite{st} generalized the Marcinkiewicz
operator to the case of higher dimension. Let $\Omega$ be
homogeneous of degree zero, integrable and have mean value zero on
the unit sphere $S^{n-1}$. Define the Marcinkiewicz integral
operator $\mathcal{M}_\Omega$ by
\begin{eqnarray}\mathcal{M}_\Omega(f)(x)= \Big(\int_0^\infty|F_{\Omega,
t}f(x)|^2\frac{{\rm d}t}{t^3}\Big)^{\frac{1}{2}},\end{eqnarray}where $$F_{\Omega,
t}f(x)= \int_{|x-y|\leq t}\frac{\Omega(x-y)}{|x-y|^{n-1}}f(y){\rm d}y $$ for
$f\in \mathcal{S}(\mathbb{R}^n)$.  Stein \cite{st}
proved that if $\Omega\in {\rm Lip}_{\alpha}(S^{n-1})$ with
$\alpha\in (0,\,1]$, then $\mathcal{M}_\Omega$ is bounded on
$L^p(\mathbb{R}^n)$ for $p\in (1,\,2]$. Benedek, Calder\'on and
Panzon showed that the $L^p(\mathbb{R}^n)$ boundedness $(p\in
(1,\,\infty)$) of $\mathcal{M}_\Omega$ holds true  under the
condition that $\Omega\in C^1(S^{n-1})$. Using the one-dimensional
result and Riesz transforms similarly as in the case of singular
integrals (see \cite{cz}) and interpolation, Walsh \cite{wal} proved
that for each $p\in (1,\,\infty)$, $\Omega\in L(\ln L)^{1/r}(\ln \ln
L)^{2(1-2/r')}(S^{n-1})$ is a sufficient condition such that
$\mathcal{M}_\Omega$ is bounded on $L^{p}(\mathbb{R}^n)$, where
$r=\min\{p,\,p'\}$ and $p'=p/(p-1)$. Ding, Fan and Pan \cite{dfp}
proved that if $\Omega\in H^1(S^{n-1})$ (the Hardy space on
$S^{n-1}$), then $\mathcal{M}_\Omega$ is bounded on
$L^p(\mathbb{R}^n)$  for all $p\in (1,\,\infty)$; Al-Salmam,  Al-Qassem,  Cheng and  Pan
\cite{aacp} proved that $\Omega\in L(\ln L)^{1/2}(S^{n-1})$ is a
sufficient condition such that $\mathcal{M}_\Omega$ is bounded on
$L^p(\mathbb{R}^n)$ for all $p\in(1,\,\infty)$. Ding, Fan and Pan \cite{dfp2} considered the boundedness on weighted $L^p({\mathbb R}^n)$
with $A_p(\mathbb{R}^n)$ when $\Omega\in L^q(S^{n-1})$ for some $q\in (1,\,\infty]$, where and in the following, for $p\in [1,\,\infty)$, $A_p(\mathbb{R}^n)$ denotes the weight function class of Muckenhoupt, see \cite{gra2} for the definitions and properties of $A_p(\mathbb{R}^n)$. For other works about the operator defined by (1.1), see \cite{a,aacp, cfp, dfp, dly1,dxy} and the related references therein.

The commutator of  $\mathcal{M}_{\Omega}$ is also of interest and
has been considered by many authors (see \cite{tw, hy,dly,chenlu,h2}). Let $b\in {\rm BMO}(\mathbb{R}^n)$, the commutator generated by
$\mathcal{M}_\Omega$ and $b$ is defined by
\begin{eqnarray}\qquad\mathcal{M}_{\Omega, b}f(x)=\bigg(\int_0^\infty\Big|\int_{|x-y|\leq t}\big(b(x)-b(y)\big)
\frac{\Omega(x-y)}{|x-y|^{n-1}}f(y)dy\Big|^2\frac{dt}{t^3}\bigg)^{\frac{1}{2}}.
\end{eqnarray} Torchinsky and Wang \cite{tw} showed that if $\Omega\in
{\rm Lip}_\alpha(S^{n-1})$ ($\alpha\in (0,\,1]$), then
$\mathcal{M}_{\Omega, b}$ is bounded on $L^p(\mathbb{R}^n)$ with
bound $C\|b\|_{{\rm BMO}(\mathbb{R}^n)}$ for all $p\in
(1,\,\infty)$. Hu and Yan \cite{hy} proved the $\Omega\in L(\ln
L)^{3/2}(S^{n-1})$ is a sufficient condition such that
$\mathcal{M}_{\Omega,\,b}$ is  bounded on $L^2$.  Ding, Lu and Yabuta \cite{dly} considered the weighted estimates for $\mathcal{M}_{\Omega,\,b}$, and proved that
if $\Omega\in L^{q}(S^{n-1})$ for some $q\in (1,\,\infty]$, then for $p\in (q',\,\infty)$ and $w\in A_{p/q'}(\mathbb{R}^n)$, or $p\in (1,\,q)$ and $w^{-1/(p-1)}\in A_{p'/q'}(\mathbb{R}^n)$, $\mathcal{M}_{\Omega,\,b}$ is bounded on $L^p(\mathbb{R}^n,\,w)$.  Chen and Lu \cite{chenlu}
improved the result in \cite{hy} and showed that if $\Omega\in L(\ln
L)^{3/2}(S^{n-1})$, then $\mathcal{M}_{\Omega,\,b}$ is bounded on
$L^p(\mathbb{R}^n)$ with bound  $C\|b\|_{{\rm BMO}(\mathbb{R}^n)}$
for all $p\in (1,\,\infty)$.

Let ${\rm CMO}(\mathbb{R}^n)$ be the
closure of $C^{\infty}_0(\mathbb{R}^n)$ in the ${\rm
BMO}(\mathbb{R}^n)$ topology, which coincide with ${\rm VMO}(\mathbb{R}^n)$, the space of
functions of vanishing mean oscillation introduced by Coifman and Weiss \cite{cw}, see also \cite{b}. Uchiyama \cite{u}
proved that if $T$ is a Calder\'on-Zygmund operator, and $b\in {\rm
BMO}(\mathbb{R}^n)$, then the commutator of $T$ defined by
$$[b,\,T]f(x)=b(x)Tf(x)-T(bf)(x),$$ is a compact operator on $L^p(\mathbb{R}^n)$ $(p\in
(1,\,\infty)$) if and only if $b\in {\rm CMO}(\mathbb{R}^n)$.
Chen and  Ding \cite{chend} considered the compactness of
$\mathcal{M}_{\Omega,\,b}$ on $L^p(\mathbb{R}^n)$, and proved that
if $\Omega$ satisfies certain regularity condition of Dini type,
then for $p\in (1,\,\infty)$, $\mathcal{M}_{\Omega,\,b}$ is compact on
$L^p(\mathbb{R}^n)$ if and only if $b\in {\rm CMO}(\mathbb{R}^n)$. Using the ideas from \cite{chenhu}, Mao, Sawano and Wu \cite{msw} consider the compactness of $\mathcal{M}_{\Omega,\,b}$ when $\Omega$ satisfies the size condition that
\begin{eqnarray} \sup_{\zeta\in
S^{n-1}}\int_{S^{n-1}}|\Omega(\eta)|\Big(\ln\frac{1}{|\eta\cdot\zeta|}\Big)^{\theta}{\rm
d}\eta<\infty,
\end{eqnarray}
and proved that if $\Omega$ satisfies (1.3) for some $\theta\in (3/2,\,\infty)$, then for $b\in {\rm CMO}(\mathbb{R}^n)$ and $p\in \big(4\theta/(4\theta-3),\,4\theta/3\big)$, $\mathcal{M}_{\Omega,\,b}$ is compact on $L^p(\mathbb{R}^n)$. Recently,  Chen and Hu \cite{chenhu2} proved that if $\Omega\in L(\log L)^{\frac{1}{2}}(S^{n-1})$, then for $b\in {\rm CMO}(\mathbb{R}^n)$ and $p\in (1,\,\infty)$, $\mathcal{M}_{\Omega,\,b}$ is completely continuous on $L^p(\mathbb{R}^n)$. Our
first purpose of this paper is to consider the complete continuity on weighted $L^p(\mathbb{R}^n)$ spaces $\mathcal{M}_{\Omega,\,b}$ when $\Omega\in L^q(S^{n-1})$ for some $q\in (1,\,\infty]$. To formulate
our main result, we first recall some definitions.

\begin{definition}
 Let $\mathcal{X}$ be a normed linear spaces and
$\mathcal{X}^{*}$ be its dual space, $\{x_k\}\subset \mathcal{X}$
and $x\in \mathcal{X}$, If for all $f\in \mathcal{X}^*$,
$$\lim_{k\rightarrow \infty}|f(x_k)-f(x)|=0,$$
then $\{x_k\}$ is said to converge to $x$ weakly, or
$x_k\rightharpoonup x$.
\end{definition}

\begin{definition}
Let $\mathcal{X}$, $\mathcal{Y}$ be two Banach spaces and $S$ be a
bounded operator from $\mathcal{X}$ to $\mathcal{Y}$.
\begin{itemize}
\item[\rm (i)] If for each bounded set $\mathcal{G}\subset \mathcal{X}$,
$S\mathcal{G}=\{Sx:\,x\in \mathcal{G}\}$ is  a strongly pre-compact
set in $\mathcal{Y}$, then $S$ is called a compact operator from
$\mathcal{X}$ to $\mathcal{Y}$;
\item[\rm (ii)] if for $\{x_k\}\subset\mathcal{X}$ and $x\in\mathcal{X}$,
$$x_k\rightharpoonup x\,\,\hbox{in}\, \,\mathcal{X}\Rightarrow \|Sx_k-Sx\|_{\mathcal{Y}}\rightarrow 0,$$
then $S$ is said to be a completely continuous operator.
\end{itemize}
\end{definition}

It is well known that, if $\mathcal{X}$ is a reflexive space, and
$S$ is completely continuous from $\mathcal{X}$ to $\mathcal{Y}$,
then $S$ is also compact from $\mathcal{X}$ to $\mathcal{Y}$. On the
other hand, if $S$  is a linear compact operator from $\mathcal{X}$
to $\mathcal{Y}$, then $S$ is also a completely continuous operator.
However, if $S$ is not linear, then $S$ is compact  do not
imply that $S$ is completely continuous. For example, the operator
$$Sx=\|x\|_{l^2}$$
is compact from $l^2$ to $\mathbb{R}$, but  not completely
continuous.

Our first result in this paper can be stated as follows.

\begin{theorem}\label{t1.2}
Let $\Omega$ be homogeneous of degree zero, have mean value zero on $S^{n-1}$ and $\Omega\in L^q(S^{n-1})$
for some $q\in (1,\,\infty]$.
Suppose that  $p$ and $w$ satisfy one of the following conditions
\begin{itemize}
\item[\rm (i)] $p\in (q',\,\infty)$ and $w\in A_{p/q'}(\mathbb{R}^n)$;
\item[\rm (ii)] $p\in (1,\,q)$ and $w^{-1/(p-1)}\in
A_{p'/q'}(\mathbb{R}^n)$;
\item[\rm (iii)] $p\in (1,\,\infty)$ and $w^{q'}\in A_{p}(\mathbb{R}^n)$.
\end{itemize} Then for $b\in {\rm
CMO}(\mathbb{R}^n)$, $\mathcal{M}_{\Omega,\,b}$ is completely
continuous on $L^p(\mathbb{R}^n,\,w)$.
\end{theorem}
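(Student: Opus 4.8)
The plan is to combine a weighted boundedness input with an approximation in $b$, a reduction to weakly null sequences, and a truncation analysis of the resulting sublinear operator. First I would record the $L^p(\mathbb{R}^n,w)$ boundedness of $\mathcal{M}_{\Omega,b}$ with operator norm at most $C\|b\|_{{\rm BMO}(\mathbb{R}^n)}$ under each of (i)--(iii): cases (i) and (ii) are exactly the weighted bounds of Ding, Lu and Yabuta quoted in the introduction, while case (iii) I would deduce from the same circle of ideas together with the fact that $w^{q'}\in A_p$ places $w$ in an $A_{p/q'}$-type class after rescaling. The structural fact to extract here is that the inner expression is \emph{linear} in $f$: writing $G_tf(x)=b(x)F_{\Omega,t}f(x)-F_{\Omega,t}(bf)(x)$, one has $\mathcal{M}_{\Omega,b}f(x)=\|G_tf(x)\|_{L^2({\rm d}t/t^3)}$, so $\mathcal{M}_{\Omega,b}$ is sublinear and the Minkowski inequality in $L^2({\rm d}t/t^3)$ gives the pointwise bound $|\mathcal{M}_{\Omega,b}f(x)-\mathcal{M}_{\Omega,b'}f(x)|\leq \mathcal{M}_{\Omega,b-b'}f(x)$.

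Next I would reduce to $b\in C_0^\infty(\mathbb{R}^n)$. Since $b\in{\rm CMO}(\mathbb{R}^n)$, pick $b_j\in C_0^\infty(\mathbb{R}^n)$ with $\|b-b_j\|_{{\rm BMO}}\to0$; the pointwise bound above and the first step give $\|\mathcal{M}_{\Omega,b}f-\mathcal{M}_{\Omega,b_j}f\|_{L^p(w)}\leq C\|b-b_j\|_{{\rm BMO}}\|f\|_{L^p(w)}$, i.e. convergence in the strong operator topology at a rate uniform over bounded sets. A standard $\varepsilon/3$ argument then transfers complete continuity from each $\mathcal{M}_{\Omega,b_j}$ to $\mathcal{M}_{\Omega,b}$, since a weakly convergent sequence is bounded and hence the two outer terms are uniformly small while the middle term tends to $0$. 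Using the linearity of $G_t$ I would similarly reduce to weakly null sequences: if $f_k\rightharpoonup f$, then with $g_k=f_k-f\rightharpoonup0$ the Minkowski inequality yields $|\mathcal{M}_{\Omega,b}f_k-\mathcal{M}_{\Omega,b}f|\leq\mathcal{M}_{\Omega,b}g_k$, so it suffices to prove $\|\mathcal{M}_{\Omega,b}g_k\|_{L^p(w)}\to0$ for $b\in C_0^\infty$ and $g_k\rightharpoonup0$.

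The core is this last statement, which I would establish by a threefold truncation, using $\sup_k\|g_k\|_{L^p(w)}<\infty$. (a) Spatial tail: since $\mathrm{supp}\,b\subset B(0,r_0)$, for $|x|$ large one has $b(x)=0$, so $G_tg_k(x)=-F_{\Omega,t}(bg_k)(x)$ only sees $y\in B(0,r_0)$ and forces $t\gtrsim|x|$, producing decay in $x$ that, with the $A_p$ doubling of $w$, makes $\int_{|x|>R}|\mathcal{M}_{\Omega,b}g_k|^pw$ small uniformly in $k$ as $R\to\infty$. (b) Scale tails: truncating the $t$-integral to $[\delta,N]$, I would bound the contributions of $t<\delta$ and $t>N$ uniformly in $k$; here the smoothness of $b$ supplies an extra factor via $|b(x)-b(y)|\leq\|\nabla b\|_\infty|x-y|$ to absorb the singularity at small $t$, while Fourier/Plancherel decay for the multiplier associated to $\frac{\Omega(\cdot)}{|\cdot|^{n-1}}\mathbf{1}_{|\cdot|\leq t}$ controls large $t$, with weighted extrapolation carrying these $L^2$ bounds to $L^p(w)$. (c) Compact middle: on $\{|x|\leq R\}$ with $t\in[\delta,N]$ the truncated operator is integration against a kernel which, again by the Lipschitz bound on $b$ and $\Omega\in L^q$, obeys a uniform equicontinuity estimate; since $g_k\rightharpoonup0$ it tends to $0$ pointwise, and equicontinuity plus uniform integrability (a weighted Fr\'echet--Kolmogorov argument) upgrades this to $L^p(w)$-norm convergence. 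Combining (a)--(c) gives $\|\mathcal{M}_{\Omega,b}g_k\|_{L^p(w)}\to0$.

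The main obstacle will be step (c), and more broadly the scale-tail estimate (b), under the sole hypothesis $\Omega\in L^q$. Because $\Omega$ carries no regularity, I cannot exploit smoothness of the angular kernel; the entire regularity budget must come from the commutator factor $b(x)-b(y)$, and extracting a quantitative, $k$-uniform equicontinuity estimate for the truncated sublinear operator in the weighted setting — reconciling the $L^2$ Fourier estimates with the $A_p$ weight through extrapolation — is the delicate technical heart of the argument.
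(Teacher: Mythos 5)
Your outer scaffolding coincides with the paper's: record the weighted bound $\|\mathcal{M}_{\Omega,b}\|\lesssim\|b\|_{{\rm BMO}}$, reduce to $b\in C_0^{\infty}(\mathbb{R}^n)$ by density of $C_0^\infty$ in ${\rm CMO}$, reduce to weakly null sequences via sublinearity, then truncate in space and in scale. The genuine gap is in your step (c), exactly where you locate the ``delicate technical heart'': you propose that the equicontinuity of the truncated operator be extracted from the factor $b(x)-b(y)$ alone, since $\Omega\in L^q(S^{n-1})$ has no regularity. This cannot work. Writing the translation difference of the commutator and pulling out the harmless term $\big(b(x+h)-b(x)\big)F_{\Omega,t}f(x+h)$, one is left with $\int\big(b(x)-b(y)\big)\big[K_t(x+h-y)-K_t(x-y)\big]f(y)\,{\rm d}y$, where $K_t(z)=\Omega(z)|z|^{-(n-1)}\chi_{\{|z|\leq t\}}(z)$. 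The factor $b(x)-b(y)$ is bounded and vanishes on the diagonal, but it multiplies the kernel difference rather than replacing it: what is needed is smallness of $\int|K_t(x+h-y)-K_t(x-y)|\,{\rm d}y$ (or an $L^{q}$-average version over annuli), and for $\Omega$ merely in $L^q(S^{n-1})$ such a translation/H\"ormander-type estimate is false --- this is precisely the known obstruction for rough kernels, and no smoothness of $b$ repairs it. The same defect undermines your step (b): ``weighted extrapolation'' does not transport the Plancherel \emph{decay} from $L^2$ to $L^p(\mathbb{R}^n,w)$; extrapolation preserves bounds, not rates.

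The paper's essential idea, absent from your plan, is to mollify the kernel before doing any compactness argument: one replaces $K^j_t$ by $K^j_t*\phi_{j-l}$ and proves the quantitative operator-norm approximation $\|\widetilde{\mathcal{M}}_{\Omega}f-\widetilde{\mathcal{M}}_{\Omega}^{l}f\|_{L^p(\mathbb{R}^n,\,w)}\lesssim 2^{-\varrho_pl}\|f\|_{L^{p}(\mathbb{R}^n,\,w)}$ (Theorem \ref{t2.2}), whose proof is the technical core: Fourier transform estimates (2.2)--(2.3), a Littlewood--Paley decomposition, weighted vector-valued maximal inequalities, and Stein--Weiss interpolation \emph{with change of measures} between a lossy weighted bound (growing like $l$) and a decaying unweighted $L^2$ bound --- this is the correct mechanism replacing your appeal to extrapolation. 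For the mollified kernel the translation estimate does hold (Lemma \ref{l3.1}), with a constant $2^{l(n+1)}$ that blows up in $l$ but is affordable thanks to the $2^{-\varrho_p l}$ gain; small scales are removed by Lemma \ref{l3.2} (this part matches your small-$t$ truncation). Compactness of the truncated, mollified commutator is then obtained from a Fr\'echet--Kolmogorov-type criterion in the vector-valued space $L^p(L^2([1,\,2]),\,l^2;\,\mathbb{R}^n,\,w)$ (Lemma \ref{l3.4}), and complete continuity survives the operator-norm limits $j_0\to-\infty$, $l\to\infty$. Without the mollification-plus-quantitative-approximation step, your program stalls at (b) and (c).
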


Our argument used in the proof of Theorem \ref{t1.2} also leads to the complete continuity of $\mathcal{M}_{\Omega,\,b}$ on weighted Morrey spaces.
\begin{definition} Let $p\in (0,\,\infty)$, $w$ be a weight and $\lambda\in (0,\,1)$. The
weighted Morrey space $L^{p,\,\lambda}(\mathbb{R}^n,\,w)$ is defined as
$$L^{p,\,\lambda}(\mathbb{R}^n,\,w)=\{f\in L_{\rm loc}^p(\mathbb{R}^n):\,\|f\|_{L^{p,\,\lambda}(\mathbb{R}^n,\,w)}<\infty\},$$
with $$\|f\|_{L^{p,\,\lambda}(\mathbb{R}^n,\,w)}=\sup_{y\in
\mathbb{R}^n,\,r>0}\Big(\frac{1}{\{w(B(y,\,r))\}^{\lambda}}\int_{B(y,\,r)}|f(x)|^pw(x)\,{\rm
d}x\Big)^{1/p},$$¡¡
here $B(y,\,r)$ denotes
the ball in $\mathbb{R}^n$ centered at $y$ and having radius $r$,
and $w(B(y,\,r))=\int_{B(y,\,r)}w(z){\rm d}z$. For simplicity, we use $L^{p,\,\lambda}(\mathbb{R}^n)$ to denote  $L^{p,\,\lambda}(\mathbb{R}^n,1)$.
\end{definition}
The Morrey space  $L^{p,\,\lambda}(\mathbb{R}^n)$ was introduced by Morrey
[17]. It is well-known that this space is closely related to some
problems in PED (see \cite{rv, shen1}), and has interest in harmonic
analysis (see \cite{ax} and the references therein). Komori and
Shiral \cite{ks} introduced the weighted Morrey spaces and
considered the properties on weighted Morrey spaces for some
classical operators. Chen, Ding and Wang \cite{cdw} considered the
compactness of $\mathcal{M}_{\Omega,\,b}$ on Morrey spaces. They proved that
if $\lambda\in (0,\,1)$, $\Omega\in L^q(S^{n-1})$ for $q\in
(1/(1-\lambda),\,\infty]$ and satisfies a regularity condition of $L^q$-Dini type, then $\mathcal{M}_{\Omega,\,b}$ is compact on $L^{p,\,\lambda}(\mathbb{R}^n)$. Our second purpose of this paper is to prove
the complete continuity of $\mathcal{M}_{\Omega,\,b}$ on weighted Morrey spaces with $A_p(\mathbb{R}^n)$ weights.
\begin{theorem}\label{t1.3}
Let $\Omega$ be homogeneous of degree zero,  have mean value zero
on $S^{n-1}$ and $\Omega\in L^q(S^{n-1})$ for some $q\in
(1,\,\infty]$. Suppose that $p\in (q',\,\infty)$, $\lambda\in (0,\,1)$ and $w\in
A_{p/q'}(\mathbb{R}^n)$; or $p\in (1,\,q')$,  $w^{r}\in A_1(\mathbb{R}^n)$ for some $r\in (q',\,\infty)$ and $\lambda\in
(0,\,1-r'/q)$. Then for $b\in {\rm
CMO}(\mathbb{R}^n)$, $\mathcal{M}_{\Omega,\,b}$ is completely
continuous on $L^{p,\,\lambda}(\mathbb{R}^n,\,w)$.
\end{theorem}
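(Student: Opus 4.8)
The plan is to prove Theorem~\ref{t1.3} by following the same strategy that underlies Theorem~\ref{t1.2}, adapting each step from the weighted Lebesgue setting to the weighted Morrey setting. The key structural fact about complete continuity is that it can be verified through an approximation argument: first I would establish complete continuity for a ``nice'' operator obtained by smoothing or truncating the kernel, and then control the error. Concretely, since $b\in {\rm CMO}(\mathbb{R}^n)$, I would fix $\varepsilon>0$ and choose $b_\varepsilon\in C_0^\infty(\mathbb{R}^n)$ with $\|b-b_\varepsilon\|_{{\rm BMO}(\mathbb{R}^n)}<\varepsilon$. The commutator depends on $b$ in a controlled way: one expects a bound of the form $\|\mathcal{M}_{\Omega,\,b}f-\mathcal{M}_{\Omega,\,b_\varepsilon}f\|_{L^{p,\,\lambda}(\mathbb{R}^n,\,w)}\le C\|b-b_\varepsilon\|_{{\rm BMO}(\mathbb{R}^n)}\|f\|_{L^{p,\,\lambda}(\mathbb{R}^n,\,w)}$, which reduces matters to proving complete continuity for $\mathcal{M}_{\Omega,\,b_\varepsilon}$ with a compactly supported smooth symbol.

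The core analytic input I would need is the boundedness of $\mathcal{M}_{\Omega,\,b}$ on $L^{p,\,\lambda}(\mathbb{R}^n,\,w)$ under the two stated hypotheses on $(p,w,\lambda)$. This should follow from the weighted $L^p$ estimates of Ding, Lu and Yabuta (quoted in the introduction) combined with the standard machinery relating weighted Lebesgue and weighted Morrey norms; in the range $p\in(q',\,\infty)$ with $w\in A_{p/q'}(\mathbb{R}^n)$ this is the direct analogue, while in the range $p\in(1,\,q')$ the condition $w^r\in A_1(\mathbb{R}^n)$ with $\lambda\in(0,\,1-r'/q)$ is tailored so that a H\"older argument on each ball converts the $L^q$-integrability of $\Omega$ into the desired local control. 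Once boundedness is in hand, I would exploit the defining characterization of complete continuity (Definition~1.2(ii)): given $f_k\rightharpoonup f$ weakly in $L^{p,\,\lambda}(\mathbb{R}^n,\,w)$, I must show $\|\mathcal{M}_{\Omega,\,b}f_k-\mathcal{M}_{\Omega,\,b}f\|_{L^{p,\,\lambda}(\mathbb{R}^n,\,w)}\to 0$. Setting $g_k=f_k-f$, so that $g_k\rightharpoonup 0$, I would split the Morrey norm into contributions from balls of small radius, large radius, and an intermediate regime, using the smoothness and compact support of the kernel of $\mathcal{M}_{\Omega,\,b_\varepsilon}$ together with the fact that weak convergence forces the integrals against any fixed $L^{p'}$ kernel to vanish.

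The main obstacle I anticipate is handling the Morrey norm, which is a supremum over all balls rather than a single integral; unlike in the $L^p$ case, weak convergence to zero does \emph{not} immediately yield smallness uniformly over the family of balls. To overcome this I would argue that for $g_k\rightharpoonup 0$ the sequence is norm-bounded, so on balls of very large or very small radius the normalization factor $\{w(B(y,r))\}^{-\lambda/p}$ together with the decay/support properties of the truncated kernel makes the corresponding piece uniformly small, reducing the supremum to a compact range of radii and centers on which one can pass to the limit via a pointwise vanishing argument (essentially the Fr\'echet--Kolmogorov criterion adapted to the Morrey setting, controlling translations and tails). A secondary technical point is verifying that $C_0^\infty(\mathbb{R}^n)$ is dense enough and that the intermediate error estimate holds with constants independent of the ball, for which the $A_p$ properties of $w$ (doubling, reverse H\"older) are exactly what is required. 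Combining the uniform approximation in $\varepsilon$ with the complete continuity of each $\mathcal{M}_{\Omega,\,b_\varepsilon}$ and a diagonal argument then yields the claim.
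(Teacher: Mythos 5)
Your overall architecture (smooth and truncate the kernel, approximate $b\in{\rm CMO}(\mathbb{R}^n)$ by $C_0^{\infty}$ functions, then run a Fr\'echet--Kolmogorov type compactness argument) is the same as the paper's, but the step at the heart of the matter contains a genuine gap. You propose to handle the Morrey supremum by arguing that on balls of very small (or very large) radius the normalization factor together with kernel properties ``makes the corresponding piece uniformly small,'' thereby reducing to a compact range of radii and centers where pointwise vanishing suffices. For small balls this is backwards: the factor $\{w(B(y,r))\}^{-\lambda}$ \emph{blows up} as $r\to 0$, so small balls are exactly where the Morrey norm detects fine-scale behavior, and for a bounded family in $L^{p,\lambda}(\mathbb{R}^n,w)$ --- even for images under the smoothed, truncated operator, which are only Lipschitz with a ``constant'' that is itself a Morrey function (cf.\ the gradient-type estimate (3.4)) --- there is no uniform smallness at small scales. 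This is precisely why compactness in Morrey spaces is delicate (these spaces are non-separable and the classical Kolmogorov--Riesz theorem does not transfer naively). The paper does not make any such reduction: it proves a bespoke sufficient criterion for strong pre-compactness in $L^{p,\lambda}(L^2([1,2]),l^2;\mathbb{R}^n,w)$ (Lemma \ref{l4.2}), whose hypotheses --- uniform decay at infinity, uniform translation continuity in $x$ and in $t$, and uniform truncation in $j$, all measured in the Morrey norm and uniform over the bounded set --- are then verified with bounds $\epsilon\|f\|_{L^{p,\lambda}(\mathbb{R}^n,w)}$ via the quantitative claims (3.4)--(3.7) established in the proof of Theorem \ref{t1.2}. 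Your sketch invokes ``essentially the Fr\'echet--Kolmogorov criterion adapted to the Morrey setting'' as if it were available off the shelf; establishing it is a substantial part of the proof.

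The second gap is that you treat the passage from weighted $L^p$ bounds to weighted Morrey bounds as ``standard machinery.'' The paper's Lemma \ref{l4.1} is exactly this transference, and its quantitative form matters: it shows that a sublinear operator dominated by a convolution-type kernel with BMO factors, bounded on $L^p(\mathbb{R}^n,w)$ with bound $D$, is bounded on $L^{p,\lambda}(\mathbb{R}^n,w)$ with bound $\lesssim D+D^{\varepsilon}$. Your scheme (like the paper's) needs \emph{small} $L^p$ operator norms to remain small on Morrey spaces --- for the commutator difference $\mathcal{M}_{\Omega,b}-\mathcal{M}_{\Omega,b_\varepsilon}$ controlled by $\|b-b_\varepsilon\|_{{\rm BMO}(\mathbb{R}^n)}$, for the kernel-smoothing error $2^{-\varrho_p l}$ of Theorem \ref{t2.2}, and for the truncation error $2^{j_0}$ of Lemma \ref{l3.2} --- and this is precisely what the $D+D^{\varepsilon}$ dependence delivers; it is not automatic, and its proof (dyadic decomposition around each ball, John--Nirenberg, and an interpolation trading the $L^p(w)$ bound against the kernel size condition) is where the restriction $\lambda\in(0,\,1-r'/q)$ in the range $p\in(1,\,q')$ actually comes from. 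Your proposal gestures at this condition being ``tailored'' to a H\"older argument but never engages with the estimate that produces it, so as written the two pillars of the proof --- the Morrey pre-compactness criterion and the quantitative $L^p$-to-Morrey transference --- are both assumed rather than proved.
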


\begin{remark} The proofs of Theorems \ref{t1.2} involve some ideas used in  \cite{chenhu} and a sufficient condition of strongly pre-compact set in $L^p(L^2([1,\,2]), l^2;\,\mathbb{R}^n,\,w)$ with $w\in A_p(\mathbb{R}^n)$. To prove Theorem \ref{t1.3}, we will establish a lemma which clarify  the relationship of the bounds on $L^p(\mathbb{R}^n,\,w)$ and the bounds on $L^{p,\,\lambda}(\mathbb{R}^n,\,w)$ for a class of sublinear operators, see Lemma 4.1 below.
\end{remark}

 We make some conventions. In what follows, $C$ always denotes a
positive constant that is independent of the main parameters
involved but whose value may differ from line to line. We use the
symbol $A\lesssim B$ to denote that there exists a positive constant
$C$ such that $A\le CB$.   For a set $E\subset\mathbb{R}^n$,
$\chi_E$ denotes its characteristic function.  Let $M$ be the Hardy-Littlewood
maximal operator. For $r\in (0,\,\infty)$, we use $M_r$ to denote
the operator $M_rf(x)=\big(M(|f|^r)(x)\big)^{1/r}.$ For a locally integrable function $f$, the sharp maximal function $M^{\sharp}f$ is defined by
$$M^{\sharp}f(x)=\sup_{Q\ni x}\inf_{c\in\mathbb{C}}\frac{1}{|Q|}\int_{Q}|f(y)-c|{\rm d}y.$$

\section{Approximation}
Let $\Omega$ be homogeneous of degree zero, integrable on $S^{n-1}$. For $t\in [1,\,2]$ and $j\in\mathbb{Z}$, set
\begin{eqnarray}K^j_t(x)=\frac{1}{2^j}\frac{\Omega(x)}{|x|^{n-1}}\chi_{\{2^{j-1}t<|x|\leq 2^jt\}}(x).\end{eqnarray}
As it was proved in \cite{drf}, if $\Omega\in L^q(S^{n-1})$ for some
$q\in (1,\,\infty]$, then there exists a constant $\alpha\in
(0,\,1)$ such that for $t\in[1,\,2]$ and
$\xi\in\mathbb{R}^n\backslash\{0\}$,
\begin{eqnarray}|\widehat{K^j_t}(\xi)|\lesssim \|\Omega\|_{L^{q}(S^{n-1})}\min\{1,\,|2^j\xi|^{-\alpha}\}.\end{eqnarray} Here and in the following for $h\in\mathcal{S}'(\mathbb{R}^n)$, $\widehat{h}$ denotes the Fourier transform of $h$. Moreover, if $\int_{S^{n-1}}\Omega(x'){\rm d}x'=0$, then
\begin{eqnarray}|\widehat{K^j_t}(\xi)|\lesssim \|\Omega\|_{L^{1}(S^{n-1})}\min\{1,\,|2^j\xi|\}.\end{eqnarray}

Let
$$\widetilde{\mathcal{M}}_{\Omega}f(x)=\Big(\int^2_1\sum_{j\in\mathbb{Z}}\big|F_{j}f(x,\,t)\big|^2{\rm d}t\Big)^{\frac{1}{2}},$$with
$$F_jf(x,\,t)=\int_{\mathbb{R}^n}K^j_t(x-y)f(y){\rm d}y.$$
For $b\in {\rm BMO}(\mathbb{R}^n)$, let
$\widetilde{\mathcal{M}}_{\Omega,\,b}$ be the commutator of
$\widetilde{\mathcal{M}}_{\Omega}$ defined by
$$\widetilde{\mathcal{M}}_{\Omega,\,b}f(x)=\Big(\int^2_1\sum_{j\in\mathbb{Z}}\big|F_{j,\,b}f(x,\,t)\big|^2{\rm d}t\Big)^{1/2},$$with
$$F_{j,\,b}f(x,\,t)=\int_{\mathbb{R}^n}\big(b(x)-b(y)\big)K^j_t(x-y)f(y){\rm d}y.$$
A trivial computation leads to that
\begin{eqnarray}\mathcal{M}_{\Omega}f(x)\approx \widetilde{\mathcal{M}}_{\Omega}f(x),\,\,
\widetilde{\mathcal{M}}_{\Omega,\,b}f(x)\approx\widetilde{\mathcal{M}}_{\Omega,\,b}f(x).
\end{eqnarray}

 Let $\phi\in
C^{\infty}_0(\mathbb{R}^n)$ be a nonnegative function such that
$\int_{\mathbb{R}^n}\phi(x){\rm d}x=1$, ${\rm
supp}\,\phi\subset\{x:\,|x|\leq 1/4\}$. For $l\in \mathbb{Z}$, let
$\phi_l(y)=2^{-nl}\phi(2^{-l}y)$. It is easy to verify that for any
$\varsigma\in (0,\,1)$,
\begin{eqnarray}|\widehat{\phi_l}(\xi)-1|\lesssim \min\{1,\,|2^l\xi|^{\varsigma}\}.\end{eqnarray}Let
$$F_{j}^lf(x,\,t)=\int_{\mathbb{R}^n}K^j_t*\phi_{j-l}(x-y)f(y)\,{\rm d}y.
$$
Define the operator $\widetilde{\mathcal{M}}_{\Omega}^l$ by
\begin{eqnarray}\widetilde{\mathcal{M}}_{\Omega}^lf(x)=\Big(\int^2_1\sum_{j\in\mathbb{Z}}\big|F_{j}^lf(x,\,t)\big|^2
{\rm d}t\Big)^{\frac{1}{2}}.\end{eqnarray}

This section is devoted to the approximation of $\widetilde{\mathcal{M}}_{\Omega}$ by $\widetilde{\mathcal{M}}_{\Omega}^l$. We will prove following theorem.

\begin{theorem}\label{t2.2}
Let $\Omega$ be homogeneous of degree zero and have mean value zero.
Suppose that $\Omega\in L^q(S^{n-1})$ for some $q\in (1,\,\infty]$,
$p$ and $w$ are the same as in Theorem \ref{t1.2}, then for
$l\in\mathbb{N}$,
$$\|\widetilde{\mathcal{M}}_{\Omega}f-\widetilde{\mathcal{M}}_{\Omega}^{l}f\|_{L^p(\mathbb{R}^n,\,w)}\lesssim
2^{-\varrho_pl}\|f\|_{L^{p}(\mathbb{R}^n,\,w)},$$ with $\varrho_p\in
(0,\,1)$ a constant depending only on $p$,\,$n$ and $w$.
\end{theorem}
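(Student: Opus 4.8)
The plan is to reduce the whole estimate to a single \emph{difference square function}, prove an unweighted $L^2$ bound that carries all the decay in $l$, prove a weighted $L^p$ bound that is \emph{uniform} in $l$, and finally merge the two by interpolation with change of measure.

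First I would reduce matters. Write $E=\ell^2\big(L^2([1,2],{\rm d}t)\big)$ for the space in which the square functions take values, so that $\widetilde{\mathcal{M}}_\Omega f(x)=\big\|\{F_jf(x,\cdot)\}_j\big\|_E$ and $\widetilde{\mathcal{M}}_\Omega^l f(x)=\big\|\{F_j^lf(x,\cdot)\}_j\big\|_E$. The reverse triangle inequality in $E$ gives the pointwise domination
$$\big|\widetilde{\mathcal{M}}_\Omega f(x)-\widetilde{\mathcal{M}}_\Omega^l f(x)\big|\le U_lf(x):=\Big(\int_1^2\sum_{j\in\mathbb{Z}}\big|F_jf(x,t)-F_j^lf(x,t)\big|^2\,{\rm d}t\Big)^{1/2},$$
so it suffices to show $\|U_lf\|_{L^p(\mathbb{R}^n,w)}\lesssim 2^{-\varrho_p l}\|f\|_{L^p(\mathbb{R}^n,w)}$. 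Here $F_jf(\cdot,t)-F_j^lf(\cdot,t)=G^j_{l,t}*f$ with $\widehat{G^j_{l,t}}(\xi)=\widehat{K^j_t}(\xi)\big(1-\widehat{\phi_{j-l}}(\xi)\big)$.

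For the unweighted endpoint I would use Plancherel: $\|U_lf\|_{L^2}^2=\int_1^2\int_{\mathbb{R}^n}\big(\sum_j|\widehat{K^j_t}(\xi)|^2|1-\widehat{\phi_{j-l}}(\xi)|^2\big)|\widehat f(\xi)|^2\,{\rm d}\xi\,{\rm d}t$, so everything reduces to bounding the bracketed multiplier uniformly in $\xi,t$. Since $\Omega\in L^q\subset L^1$, I combine (2.2) and (2.3) into $|\widehat{K^j_t}(\xi)|\lesssim\min\{1,|2^j\xi|,|2^j\xi|^{-\alpha}\}$, fix some $\varsigma\in(0,\alpha)$ in (2.5), and split the sum over $j$ according to $|2^j\xi|\le1$, $1<|2^j\xi|\le 2^l$, and $|2^j\xi|>2^l$. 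In the first region the product is $\lesssim 2^{-2\varsigma l}|2^j\xi|^{2+2\varsigma}$; in the second $\lesssim 2^{-2\varsigma l}|2^j\xi|^{2\varsigma-2\alpha}$ with a summable exponent because $\varsigma<\alpha$; in the third $\lesssim |2^j\xi|^{-2\alpha}$ summing to $\lesssim 2^{-2\alpha l}$. Each region contributes $\lesssim 2^{-2\varsigma l}$, and integrating over $t\in[1,2]$ yields $\|U_lf\|_{L^2(\mathbb{R}^n)}\lesssim 2^{-\varsigma l}\|f\|_{L^2(\mathbb{R}^n)}$.

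Next I would record the uniform weighted bound. By the triangle inequality in $E$, $U_lf\le \widetilde{\mathcal{M}}_\Omega f+\widetilde{\mathcal{M}}_\Omega^l f$ pointwise. Under each of (i)--(iii) the first term is bounded on $L^p(\mathbb{R}^n,w)$ by the weighted theory for $\mathcal{M}_\Omega\approx\widetilde{\mathcal{M}}_\Omega$. For the second, since $|\widehat{\phi_{j-l}}|\le1$ the mollified kernels $K^j_t*\phi_{j-l}$ inherit (2.2)--(2.3) with constants independent of $l$, and their size and smoothness are no worse than those of $K^j_t$; hence the same weighted argument gives $\|\widetilde{\mathcal{M}}_\Omega^l f\|_{L^p(w)}\lesssim\|f\|_{L^p(w)}$ uniformly in $l$. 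Thus $\|U_lf\|_{L^p(w)}\lesssim\|f\|_{L^p(w)}$ with constant independent of $l$, and, crucially, the same holds on an open neighbourhood of exponents and weights of the same admissible type.

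Finally I would transfer the $L^2$ decay into the weighted space by interpolation with change of measure: realize the target $(p,w)$ as a datum between the unweighted vertex $(2,1)$, where $U_l$ has norm $\lesssim 2^{-\varsigma l}$, and an auxiliary admissible endpoint $(p_1,w_1)$, where $U_l$ has norm $\lesssim 1$, with $\tfrac1p=\tfrac{\theta}{2}+\tfrac{1-\theta}{p_1}$ and $w^{1/p}=w_1^{(1-\theta)/p_1}$; the interpolated norm is then $\lesssim (2^{-\varsigma l})^{\theta}=2^{-\varrho_p l}$ with $\varrho_p=\varsigma\theta$. The main obstacle is precisely to guarantee this representation: one must choose $\theta\in(0,1)$ bounded away from $0$ so that $w_1=w^{\,p_1/((1-\theta)p)}$ is still admissible for $p_1$ in the appropriate class of (i)--(iii). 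For $\theta$ small this $w_1$ is a small power-perturbation of $w$ and $p_1$ is close to $p$, so admissibility persists by the openness of the Muckenhoupt conditions together with the reverse Hölder (self-improvement) property of $A_p$ weights; verifying this carefully in each of the three cases is the delicate point of the argument.
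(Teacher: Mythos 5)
Your reduction to the difference square function $U_lf$, your Plancherel computation giving $\|U_lf\|_{L^2(\mathbb{R}^n)}\lesssim 2^{-\varsigma l}\|f\|_{L^2(\mathbb{R}^n)}$, and your closing Stein--Weiss interpolation with change of measure all match the paper's skeleton and are sound (indeed you are more careful than the paper about how the target pair $(p,w)$ is positioned between the two endpoints). The gap is your third step: the assertion that $\|\widetilde{\mathcal{M}}_\Omega^l f\|_{L^p(\mathbb{R}^n,w)}\lesssim\|f\|_{L^p(\mathbb{R}^n,w)}$ uniformly in $l$ because the mollified kernels $K^j_t*\phi_{j-l}$ ``inherit (2.2)--(2.3)'' and have ``size and smoothness no worse than those of $K^j_t$, hence the same weighted argument'' applies. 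This is exactly the hard part of the theorem, and it cannot be waved through. The weighted theory you invoke for $\mathcal{M}_\Omega$ (Ding--Fan--Pan, Ding--Lu--Yabuta) is proved for kernels of the homogeneous form $\Omega(y/|y|)|y|^{-(n-1)}$; the kernels $K^j_t*\phi_{j-l}$ are not of this form, so that theory does not transfer verbatim. Moreover, ``smoothness no worse than $K^j_t$'' is vacuous ($K^j_t$ is rough), and the quantitative regularity estimates that actually drive kernel-based weighted arguments \emph{degrade} with $l$: the H\"ormander-type bound of Lemma 2.2 (Watson) grows like $l$, and that of Lemma 3.1 like $2^{l(n+1)}$. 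Consistently with this, the paper never proves a uniform bound: it proves (2.10), $\|\widetilde{\mathcal{M}}_\Omega^l f\|_{L^p(\mathbb{R}^n,w)}\lesssim l\,\|f\|_{L^p(\mathbb{R}^n,w)}$, and that occupies essentially the whole of Section 2 --- a Littlewood--Paley decomposition into the pieces ${\rm E}_1,{\rm E}_2$ combined with the weighted vector-valued inequality for $M_\Omega$ (Lemma 2.4) in case (ii), the sharp-function estimate $M^{\sharp}(\widetilde{\mathcal{M}}_\Omega^l f)\lesssim l\,M_{q'}f$ in case (i), and the Kurtz--Wheeden interpolation argument in case (iii).

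Note that your overall architecture would survive with any bound that is subexponential in $l$ (a factor $l$, as the paper gets, is harmless after interpolation against $2^{-\varsigma l}$). So the defect is not the strategy but the missing proof of the weighted estimate for the mollified square function, which is the genuine content of the theorem; as written, your step 3 assumes the conclusion of the paper's main technical work.
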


To prove Theorem \ref{t2.2}, we will use some lemmas.

\begin{lemma}\label{l2.2}
Let $\Omega$ be
homogeneous of degree zero and belong to $L^q(S^{n-1})$ for some $q\in (1,\,\infty]$, $K_t^j$ be defined as in (2.1). Then for $t\in [1,\,2]$,
$l\in\mathbb{N}$, $R>0$ and  $y\in \mathbb{R}^n$ with $|y|<R/4$,
\begin{eqnarray*}\sum_{j\in\mathbb{Z}}\sum_{k=1}^{\infty}(2^kR)^{\frac{n}{q'}}\Big(\int_{2^kR<|x|\leq 2^{k+1}R}\big|K^j_{t}*\phi_{j-l}(x+y)-K^j_{t}*\phi_{j-l}(x)\big|^{q}{\rm d}x\Big)^{\frac{1}{q}}\lesssim l.\end{eqnarray*}
\end{lemma}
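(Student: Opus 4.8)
The plan is to estimate the smoothness of the mollified kernel $K^j_t*\phi_{j-l}$ on dyadic annuli far from the origin, and sum the resulting contributions over $j$ and $k$. The starting observation is that $K^j_t$ is supported in $\{2^{j-1}t<|x|\le 2^jt\}$ and $\phi_{j-l}$ is supported in $\{|x|\le 2^{j-l}/4\}$, so $K^j_t*\phi_{j-l}$ is essentially supported where $|x|\approx 2^j$. For a fixed annulus $2^kR<|x|\le 2^{k+1}R$, only those $j$ with $2^j$ comparable to $2^kR$ contribute meaningfully; this is the mechanism that will make the double sum over $j$ and $k$ converge and produce the bound linear in $l$.

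First I would split the analysis into two regimes according to the relative size of the translation $|y|<R/4$ and the scale $2^{j-l}$ of the mollifier. \emph{Regularity regime.} When $2^j$ is much larger than $|y|$, I would write the difference $K^j_t*\phi_{j-l}(x+y)-K^j_t*\phi_{j-l}(x)$ as $\int K^j_t(z)\big(\phi_{j-l}(x+y-z)-\phi_{j-l}(x-z)\big){\rm d}z$ and exploit the smoothness of $\phi$: by the mean value theorem the bracket is bounded by $|y|\,2^{-(j-l)}\|\nabla\phi\|_\infty 2^{-n(j-l)}$ on the appropriate support, giving a gain of $|y|2^{-(j-l)}$. \emph{Support/size regime.} For the remaining $j$ I would use the $L^q(S^{n-1})$ size of $\Omega$ together with Young's inequality, estimating $\|K^j_t*\phi_{j-l}\|_{L^q}\lesssim \|K^j_t\|_{L^q}\|\phi_{j-l}\|_{L^1}\lesssim 2^{-j}\|\Omega\|_{L^q(S^{n-1})}2^{jn/q'}$, after normalizing by the annulus measure. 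In both regimes I would track the factor $(2^kR)^{n/q'}$ coming from the annulus and check that it cancels against the $L^q$ normalization of the kernel on that annulus.

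The heart of the matter is the bookkeeping of the double sum. For each $k$, the relevant $j$ range where both kernels have overlapping support with the annulus is an interval of length $\lesssim l$ in $j$ (this is where the $l$ enters): the mollification at scale $2^{j-l}$ smears the sharp cutoff of $K^j_t$ over a band whose logarithmic width in the radial variable is of order $l$. Outside this band the difference is either zero (disjoint supports) or controlled by the regularity gain $|y|2^{-(j-l)}$, which is summable in $j$ and in $k$ as a geometric series. Assembling these, the $k$-sum over annuli converges geometrically, the $j$-sum over the overlap band contributes the factor $l$, and the constant depends only on $n$, $q$, and $\|\Omega\|_{L^q(S^{n-1})}$.

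The main obstacle I anticipate is the careful treatment of the transition region where $|x|\approx 2^j$ and the translated support $\{|x+y-z|\le 2^{j-l}/4,\ 2^{j-1}t<|z|\le 2^jt\}$ only partially overlaps the original one; here neither the crude size estimate nor the full regularity estimate is sharp by itself, and one must interpolate between them. I would handle this by splitting $K^j_t*\phi_{j-l}$ into a piece where $x$ is well inside the support of the mollified kernel (use regularity) and a boundary piece near $|x|=2^{j-1}t$ or $|x|=2^jt$ of thickness $2^{j-l}$ (use the size bound on a thin shell, picking up the $2^{-l}$ measure factor that, summed over the $O(l)$ relevant scales, again yields a bound of order $l$). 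Keeping the exponent $n/q'$ balanced throughout this splitting is the delicate point, and I expect Hölder's inequality with exponent $q$ on the thin shells to be the technical workhorse.
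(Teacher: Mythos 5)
The paper does not actually write out a proof of Lemma \ref{l2.2}; it refers the reader to Watson \cite{wat}. Measured against the standard argument, your proposal contains the right two ingredients (the smoothness of $\phi_{j-l}$ and the $L^q$ size of $K^j_t$), but the mechanism you assign to the factor $l$ is false, and this breaks your bookkeeping. You claim that for each annulus $2^kR<|x|\le 2^{k+1}R$ the contributing $j$ form an interval of length $\approx l$, because ``the mollification at scale $2^{j-l}$ smears the sharp cutoff of $K^j_t$ over a band whose logarithmic width in the radial variable is of order $l$.'' Since $l\ge 1$, the mollifier scale $2^{j-l}$ is \emph{smaller} than the kernel scale $2^j$, so ${\rm supp}\,(K^j_t*\phi_{j-l})\subset\{2^{j-2}\le |x|\le 2^{j+2}\}$: the smearing changes the radii by a relative amount $2^{-l}$, and the logarithmic width of the support stays $O(1)$ (it even shrinks as $l$ grows). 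Hence only $O(1)$ values of $j$ contribute per annulus, uniformly in $l$. With that corrected, your assembly no longer yields the bound: in your accounting each $k$ would contribute $O(l)$ terms estimated purely by the size bound, which carries no decay in $k$, so the sum over $k\ge 1$ would diverge instead of totalling $\lesssim l$ (your simultaneous claim that ``the $k$-sum converges geometrically'' while each $k$ contributes a factor $l$ is internally inconsistent). The closing paragraph about transition shells of thickness $2^{j-l}$ is likewise a red herring, and ``$2^{-l}$ summed over $O(l)$ scales'' would give $l2^{-l}$, not $l$.

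The correct source of the factor $l$ is the summation over scales of $\min\{1,\,2^l|y|2^{-j}\}$. By Young's inequality, $\|K^j_t*\phi_{j-l}(\cdot+y)-K^j_t*\phi_{j-l}\|_{L^q}\le \|K^j_t\|_{L^q}\,\|\phi_{j-l}(\cdot+y)-\phi_{j-l}\|_{L^1}\lesssim \|\Omega\|_{L^q(S^{n-1})}\,2^{-jn/q'}\min\{1,\,|y|2^{l-j}\}$, where $\|K^j_t\|_{L^q}\approx\|\Omega\|_{L^q(S^{n-1})}2^{-jn/q'}$ (your stated size bound $2^{-j}\|\Omega\|_{L^q(S^{n-1})}2^{jn/q'}$ has the wrong exponent; it is exactly $2^{-jn/q'}$ that cancels the weight $(2^kR)^{n/q'}$ when $2^j\approx 2^kR$). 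Since only $2^j\approx 2^kR$ contributes, the double sum collapses to $\sum_{k\ge 1}\min\{1,\,2^l|y|(2^kR)^{-1}\}\le\sum_{k\ge 1}\min\{1,\,2^{l-k-2}\}\lesssim l$, using $|y|<R/4$: the roughly $l$ values of $k$ with $2^kR\le 2^l|y|$ each contribute $1$, and the remaining terms decay geometrically. In particular the threshold between your two regimes must be $2^j\approx 2^l|y|$, not $2^j\gg|y|$; the $l$ counts precisely the scales between $R$ and $2^lR$ on which the smoothness gain has not yet kicked in, and no interpolation on transition shells is needed once the difference is handled by Young's inequality with the $L^1$ modulus of continuity of $\phi_{j-l}$.
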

For the proof of Lemma \ref{l2.2}, see \cite{wat}.

\begin{lemma} \label{l2.4}Let $\Omega$ be homogeneous of degree zero and $\Omega\in L^q(S^{n-1})$ for some $q\in (1,\,\infty]$, $p\in (1,\,q)$ and $w^{-1/(p-1)}\in A_{p'/q'}(\mathbb{R}^n)$. Then
\begin{eqnarray}\quad\Big\|\Big(\sum_{j\in\mathbb{Z}}|K^j_t*\phi_{j-l}*f_j|^2\Big)^{\frac{1}{2}}\Big\|_{L^p(\mathbb{R}^n,\,w)}\lesssim
\Big\|\Big(\sum_{j\in\mathbb{Z}}|f_j|^2\Big)^{\frac{1}{2}}\Big\|_{L^p(\mathbb{R}^n,\,w)}.\end{eqnarray}
\end{lemma}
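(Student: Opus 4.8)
The plan is to regard the family $\{K^j_t*\phi_{j-l}\}_{j\in\mathbb{Z}}$ as the diagonal operator-valued kernel of a single convolution operator $\mathcal{T}$ acting on $\ell^2$-valued functions, $\mathcal{T}(\{f_j\})=\{K^j_t*\phi_{j-l}*f_j\}_j$, so that the asserted inequality is precisely the boundedness of $\mathcal{T}$ on $L^p(\ell^2;\mathbb{R}^n,w)$. I would deduce this from two ingredients: the unweighted $L^2(\ell^2)$ estimate for $\mathcal{T}$, and the $L^q$-Hörmander type regularity of its kernel furnished by Lemma \ref{l2.2}. Since the hypothesis $w^{-1/(p-1)}\in A_{p'/q'}(\mathbb{R}^n)$ with $p\in(1,q)$ is the \emph{dual} form of the condition $\sigma\in A_{\tilde p/q'}(\mathbb{R}^n)$ with $\tilde p\in(q',\infty)$, I would first reduce matters to the latter situation by duality, and then prove the corresponding estimate by a sharp maximal function argument in the spirit of Watson \cite{wat}.

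For the $L^2$ bound, Plancherel's theorem together with $\widehat{K^j_t*\phi_{j-l}}=\widehat{K^j_t}\,\widehat{\phi_{j-l}}$, the uniform estimate $|\widehat{K^j_t}(\xi)|\lesssim\|\Omega\|_{L^q(S^{n-1})}$ from (2.2), and $|\widehat{\phi_{j-l}}(\xi)|\le1$ give
\[
\Big\|\Big(\sum_{j}|K^j_t*\phi_{j-l}*f_j|^2\Big)^{1/2}\Big\|_{L^2(\mathbb{R}^n)}^2=\sum_j\int_{\mathbb{R}^n}\big|\widehat{K^j_t}(\xi)\widehat{\phi_{j-l}}(\xi)\big|^2\big|\widehat{f_j}(\xi)\big|^2\,{\rm d}\xi\lesssim\Big\|\Big(\sum_j|f_j|^2\Big)^{1/2}\Big\|_{L^2(\mathbb{R}^n)}^2,
\]
uniformly in $t\in[1,2]$ and $l$. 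The adjoint $\mathcal{T}^*$ is again a diagonal convolution operator, with kernels $\widetilde{K^j_t}*\widetilde{\phi_{j-l}}$, where $\widetilde{g}(x)=\overline{g(-x)}$; since $\widetilde{K^j_t}$ is built from $\widetilde{\Omega}(\theta)=\overline{\Omega(-\theta)}\in L^q(S^{n-1})$ with mean value zero, $\mathcal{T}^*$ is of exactly the same type as $\mathcal{T}$ and inherits both the $L^2(\ell^2)$ bound and the regularity of Lemma \ref{l2.2}. Because the dual of $L^p(\ell^2;\mathbb{R}^n,w)$ under the Lebesgue pairing $\sum_j\int f_j\overline{g_j}$ is $L^{p'}(\ell^2;\mathbb{R}^n,w^{1-p'})$, boundedness of $\mathcal{T}$ on $L^p(\ell^2;\mathbb{R}^n,w)$ is equivalent to boundedness of $\mathcal{T}^*$ on $L^{p'}(\ell^2;\mathbb{R}^n,\sigma)$ with $\sigma=w^{1-p'}$; here $\tilde p:=p'\in(q',\infty)$ and $\sigma\in A_{\tilde p/q'}(\mathbb{R}^n)$.

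It then remains to prove the $\ell^2$-valued bound for an operator of this type in the range $\tilde p\in(q',\infty)$, $\sigma\in A_{\tilde p/q'}(\mathbb{R}^n)$. For $\delta\in(0,1)$ write $M^{\sharp}_{\delta}g=\big(M^{\sharp}(|g|^{\delta})\big)^{1/\delta}$. Since $\sigma\in A_{\tilde p/q'}\subset A_\infty$, the Fefferman–Stein inequality $\|g\|_{L^{\tilde p}(\sigma)}\lesssim\|M^{\sharp}_{\delta}g\|_{L^{\tilde p}(\sigma)}$ reduces everything to the pointwise estimate
\[
M^{\sharp}_{\delta}\Big(\Big(\sum_j\big|\widetilde{K^j_t}*\widetilde{\phi_{j-l}}*f_j\big|^2\Big)^{1/2}\Big)(x)\lesssim M_{q'}\Big(\Big(\sum_j|f_j|^2\Big)^{1/2}\Big)(x).
\]
To prove this, fix a ball $B=B(x_0,R)$ and split each $f_j=f_j\chi_{2B}+f_j\chi_{(2B)^c}$. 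The local part is handled by Kolmogorov's inequality together with the unweighted endpoint estimate for $\mathcal{T}^*$ (a consequence of the $L^2(\ell^2)$ bound and the $L^q$-Hörmander condition), which yields $M_{q'}\big((\sum_j|f_j|^2)^{1/2}\big)(x_0)$. For the far part, one subtracts the constant equal to the $\ell^2$-norm of $\{\widetilde{K^j_t}*\widetilde{\phi_{j-l}}*(f_j\chi_{(2B)^c})(x_0)\}_j$, uses the triangle inequality in $\ell^2$, decomposes $(2B)^c$ into the dyadic annuli $2^kR<|y-x_0|\le2^{k+1}R$, and applies Hölder in each annulus with exponents $q$ and $q'$, bounding the average over $B$ by
\[
\sum_{j}\sum_{k=1}^{\infty}(2^kR)^{\frac{n}{q'}}\Big(\int_{2^kR<|x|\le2^{k+1}R}\big|\widetilde{K^j_t}*\widetilde{\phi_{j-l}}(x+y)-\widetilde{K^j_t}*\widetilde{\phi_{j-l}}(x)\big|^{q}{\rm d}x\Big)^{\frac1q}M_{q'}\Big(\Big(\sum_j|f_j|^2\Big)^{1/2}\Big)(x_0),
\]
and the first factor is $\lesssim l$ by Lemma \ref{l2.2}. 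Finally, since $M_{q'}g=\big(M(|g|^{q'})\big)^{1/q'}$ and $M$ is bounded on $L^{\tilde p/q'}(\sigma)$ because $\sigma\in A_{\tilde p/q'}(\mathbb{R}^n)$, the operator $M_{q'}$ is bounded on $L^{\tilde p}(\sigma)$, which closes the estimate and, after undoing the duality, proves the lemma.

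The main obstacle is the vector-valued sharp maximal estimate: one must check that replacing a single function by the $\ell^2$-sum of the $f_j$ preserves the gain, and that the $L^q$-Hörmander bound of Lemma \ref{l2.2}, which is \emph{summed over $j$}, is exactly what controls the far part uniformly in the whole sequence $\{f_j\}$. The growth factor $l$ produced by Lemma \ref{l2.2} is harmless here because $l$ is fixed, but this linear dependence on $l$ must be tracked, since it is precisely what reappears and forces the exponential gain $2^{-\varrho_p l}$ in Theorem \ref{t2.2}.
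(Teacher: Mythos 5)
Your route is sound but genuinely different from the paper's, which is much shorter: the paper observes the pointwise domination $|K^j_t*\phi_{j-l}*f_j(x)|\lesssim M_{\Omega}Mf_j(x)$ (uniformly in $t$ and $l$, since $\|\phi_{j-l}\|_{L^1(\mathbb{R}^n)}=1$), then invokes two known weighted vector-valued inequalities --- (2.9) for $M_{\Omega}$ from \cite{duo}, valid for $p\in(1,\,2]$ under either condition (i) or (ii) of Theorem \ref{t1.2}, and the Anderson--John inequality \cite{aj} for $M$ --- to obtain (2.7) for $p\in(1,\,2]$, and finally extends to $p\in(2,\,q)$ by the same duality you employ, only in the opposite logical order. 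You instead dualize at the outset, reducing the hypothesis $p\in(1,\,q)$, $w^{-1/(p-1)}\in A_{p'/q'}(\mathbb{R}^n)$ to the condition-(i) situation $\tilde p=p'\in(q',\,\infty)$, $\sigma=w^{1-p'}\in A_{\tilde p/q'}(\mathbb{R}^n)$, and then prove that case from scratch by a Watson-style sharp-function argument: Kolmogorov's inequality plus a vector-valued weak $(1,1)$ bound for the local part (this weak $(1,1)$ bound does follow, as you say, from the $L^2(\ell^2)$ estimate and the operator-norm H\"ormander condition obtained by summing Lemma \ref{l2.2} over $j$), Lemma \ref{l2.2} together with the crude bound $M_{q'}f_j\le M_{q'}\big((\sum_i|f_i|^2)^{1/2}\big)$ for the far part, and the Fefferman--Stein inequality plus the $A_{\tilde p/q'}$-boundedness of $M_{q'}$ to close. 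This is in effect a vector-valued version of the paper's own treatment of case (i) in Theorem \ref{t2.2}, namely estimate (2.18); what it buys is self-containedness (no appeal to the weighted vector-valued inequality for $M_{\Omega}$), at the cost of length.

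The one substantive caveat is the factor $l$. As stated, Lemma \ref{l2.4} asserts (2.7) with a constant independent of $l$, and that is what the paper's domination argument delivers. Your far-part bound from Lemma \ref{l2.2}, and likewise your weak $(1,1)$ constant, are $O(l)$, so your argument proves (2.7) only with constant $C\,l$; note also that your displayed sharp-function estimate, which has no $l$ on the right-hand side, is not quite what your own far-part computation produces. This weaker form still serves the lemma's only purpose in the paper: it feeds into (2.10), whose right-hand side already carries a factor $l$, and any polynomial-in-$l$ loss is absorbed when (2.10) is interpolated against the $2^{-\alpha l}$ decay of the $L^2$ estimate at the start of the proof of Theorem \ref{t2.2}. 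But as a proof of the lemma as literally stated, you should either weaken the conclusion to $C\,l$ or explain how to remove the $l$ --- which a method resting on the kernel regularity of Lemma \ref{l2.2} inherently does not do, whereas the paper's argument avoids kernel regularity altogether and uses only size, which is exactly why it is $l$-uniform.
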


\begin{proof} Let $M_{\Omega}$ be the maximal operator defined by
\begin{eqnarray}M_{\Omega}h(x)=\sup_{r>0}\frac{1}{|B(x,\,r)|}\int_{B(x,\,r)}|\Omega(x-y)h(y)|{\rm d}y.\end{eqnarray}
We know from the proof of Lemma 1 in \cite{duo} that for $p\in (1,\,2]$,
\begin{eqnarray}\Big\|\Big(\sum_{j\in\mathbb{Z}}|M_{\Omega}f_j|^2\Big)^{\frac{1}{2}}\Big\|_{L^p(\mathbb{R}^n,\,w)}\lesssim
\Big\|\Big(\sum_{j\in\mathbb{Z}}|f_j|^2\Big)^{\frac{1}{2}}\Big\|_{L^p(\mathbb{R}^n,\,w)},\end{eqnarray}
provided that $p\in (q',\,\infty)$ and $w\in A_{p/q'}(\mathbb{R}^n)$, or $p\in (1,\,q)$ and $w^{-1/(p-1)}\in A_{p'/q'}(\mathbb{R}^n)$. On the other hand, it is easy to verify that
$$|K^j_t*\phi_{j-l}*f_j(x)|\lesssim M_{\Omega}Mf_j(x).
$$
The inequality (2.9), together with the weighted vector-valued inequality of $M$ (see Theorem 3.1 in \cite{aj}), proves that (2.7) hold when $p\in (1,\,2]$, $p\in (q',\,\infty)$ and $w\in A_{p/q'}(\mathbb{R}^n)$, or $p\in (1,\,q)$ and $w^{-1/(p-1)}\in A_{p'/q'}(\mathbb{R}^n)$. This, via  a standard duality argument, shows that (2.7) holds when $p\in(2,\,\infty)$, $p\in (1,\,q)$ and $w^{-1/(p-1)}\in A_{p'/q'}(\mathbb{R}^n)$.
\end{proof}

{\it Proof of Theorem \ref{t2.2}}. We employ the ideas used in \cite{wat}. By Fourier transform estimates (2.2) and (2.5), and the Plancherel theorem, we know
that
\begin{eqnarray*}
\|\widetilde{\mathcal{M}}_{\Omega}f-\widetilde{\mathcal{M}}_{\Omega}^{l}f\|_{L^2(\mathbb{R}^n)}^2
&=&\int^2_1\Big\|\Big(\sum_{j\in\mathbb{Z}}\big|F_lf(\cdot,\,t)-F_{j}^lf(\cdot,\,t)\big|^2\Big)^{\frac{1}{2}}\Big\|_{L^2(\mathbb{R}^n)}^2{\rm
d}t\\
&=&\int^2_1\sum_{j\in\mathbb{Z}}\int_{\mathbb{R}^n}|\widehat{K_t^j}(\xi)|^2|1-\widehat{\phi_{j-l}}(\xi)|^2|\widehat{f}(\xi)|^2{\rm
d}\xi{\rm d}t\\
&\lesssim&2^{-\alpha l}\|f\|_{L^2(\mathbb{R}^n)}^2.
\end{eqnarray*}
Now let $p$ and $w$ be the same as in Theorem \ref{t1.2}. Recall that $\mathcal{M}_{\Omega}$  is bounded on $L^p(\mathbb{R}^n,\,w)$ and so is  $\widetilde{\mathcal{M}}_{\Omega}$. Thus, by interpolation with changes of measures of Stein and Weiss \cite{stw}, it suffices to prove that
\begin{eqnarray}\|\widetilde{\mathcal{M}}_{\Omega}^{l}f\|_{L^p(\mathbb{R}^n,\,w)}\lesssim
l\|f\|_{L^{p}(\mathbb{R}^n,\,w)}.\end{eqnarray}

We now prove (2.10) for the case $p\in (1,\,q)$ and $w^{-1/(p-1)}\in A_{p'/q'}(\mathbb{R}^n)$. Let $\psi\in C^{\infty}_0(\mathbb{R}^n)$ be a radial function such that ${\rm supp}\,\psi\subset \{1/4\leq |\xi|\leq 4\}$ and
$$\sum_{i\in\mathbb{Z}}\psi(2^{-i}\xi)=1,\,\,|\xi|\not =0.$$
Define the multiplier operator $S_i$ by
$$\widehat{S_if}(\xi)=\psi(2^{-i}\xi)\widehat{f}(\xi).$$
Set
$${\rm E}_1f(x)=\sum_{m=-\infty}^0\Big(\int^2_1\sum_{j}\Big|K^j_t*\phi_{j-l}*(S_{m-j}f)(x)\Big|^2{\rm d}t\Big)^{\frac{1}{2}},$$
$${\rm E_2}f(x)=\sum_{m=1}^\infty\Big(\int^2_1\sum_{j}\Big|K^j_t*\phi_{j-l}*(S_{m-j}f)(x)\Big|^2{\rm d}t\Big)^{\frac{1}{2}}.$$
It then follows that for $f\in \mathcal{S}(\mathbb{R}^n)$,
$$\Big\|\Big(\int^2_1\sum_{j}\big|K^j_t*\phi_{j-l}*f(x)\big|^2{\rm d}t\Big)^{\frac{1}{2}}\Big\|_{L^p(\mathbb{R}^n)}\leq
\sum_{i=1}^2\|{\rm E}_if\|_{L^p(\mathbb{R}^n)}.$$

We now estimate the term ${\rm E}_1$. By Fourier transform estimate (2.3), we know that
\begin{eqnarray}
&&\Big\|\Big(\int^2_1\sum_{j}\Big|K^j_t*\phi_{j-l}*(S_{m-j}f)(x)\Big|^2{\rm d}t\Big)^{\frac{1}{2}}\Big\|_{L^2(\mathbb{R}^n)}^2\\
&&\quad=\int^2_1\int_{\mathbb{R}^n}\sum_{j\in\mathbb{Z}}\Big|K^j_t*\phi_{j-l}*(S_{m-j}f)(x)\Big|^2{\rm d}x{\rm d}t\nonumber\\
&&\quad\lesssim\sum_{j\in\mathbb{Z}}\int_{\mathbb{R}^n}|2^j\xi||\psi(2^{-m+j}\xi)|^2|\widehat{f}(\xi)|^2{\rm d}\xi\nonumber\\
&&\quad\leq 2^{2m}\|f\|_{L^2(\mathbb{R}^n)}^2.\nonumber
\end{eqnarray}
On the other hand,  applying the Minkowski
inequality, Lemma \ref{l2.4} and the weighted Littlewood-Paley theory, we have  that
\begin{eqnarray}
&&\Big\|\Big(\int^2_1\sum_{j}\Big|K^j_t*\phi_{j-l}*(S_{m-j}f)(x)\Big|^2{\rm d}t\Big)^{\frac{1}{2}}\Big\|_{L^p(\mathbb{R}^n,\,w)}^2\\
&&\quad\leq \int^2_1\Big(\int_{\mathbb{R}^n}\Big(\sum_{j\in\mathbb{Z}}\Big|K^j_t*\phi_{j-l}*(S_{m-j}f)(x)\Big|^2\Big)^{p/2}w(x){\rm d}x\Big)^{2/p}{\rm d}t\nonumber\\
&&\quad\leq \|f\|_{L^p(\mathbb{R}^n,\,w)}^2,\,\,p\in[2,\,\infty).\nonumber
\end{eqnarray}
To estimate
$$\Big\|\Big(\int^2_1\sum_{j}\Big|K^j_t*\phi_{j-l}*(S_{m-j}f)(x)\Big|^2{\rm
d}t\Big)^{\frac{1}{2}}\Big\|_{L^p(\mathbb{R}^n,\,w)}$$ for $p\in (1,\,2)$, we
consider the mapping $\mathcal{F}$ defined by
$$\mathcal{F}:\,\,\{h_j(x)\}_{j\in\mathbb{Z}}\longrightarrow \{K^j_t*\phi_{j-l}*h_j(x)\}.$$
Note that for any $t\in (1,\,2)$,
$$\big|K^j_t*\phi_{j-l}*h_j(x)\big|\lesssim MM_{\Omega}h_j(x).$$
We choose $p_0\in (1,\,p)$ such that $w^{-1/(p_0-1)}\in A_{p_0'/q'}(\mathbb{R}^n)$. Then by the weighted estimates for $M_{\Omega}$ (see \cite{duo}), we have  that
\begin{eqnarray}&&\int_{\mathbb{R}^n}\int^2_1\sum_{j\in \mathbb{Z}}\big|K^j_t*\phi_{j-l}*h_j(x)\big|^{p_0}{\rm d}tw(x){\rm d}x
\lesssim\int_{\mathbb{R}^n}\sum_{j\in\mathbb{Z}}|h_j(x)|^{p_0}w(x){\rm
d}x.
\end{eqnarray}
Also, we have that
$$\sup_{j\in \mathbb{Z}}\sup_{t\in[1,\,2]}\big|K^j_t*\phi_{j-l}*h_j(x)\big|\lesssim\sup_{j\in\mathbb{Z}}|h_j(x)|.
$$
which implies that for $p_1\in (1,\,\infty)$,
\begin{eqnarray}&&\Big\|\sup_{j\in \mathbb{Z}}\sup_{t\in[1,\,2]}\big|K^j_t*\phi_{j-l}*h_j\big|\Big\|_{L^{p_1}(\mathbb{R}^n,\,w)}\lesssim
\Big\|\sup_{j\in\mathbb{Z}}|h_j|\Big\|_{L^{p_1}(\mathbb{R}^n,\,w)}.
\end{eqnarray}
By interpolation, we deduce from the inequalities (2.13) and (2.14)
(with $p_0\in (1,\,2)$, $p_1\in (2,\,\infty)$ and
$1/p=1/2+(2-p_0)/(2p_1)$) that
$$\Big\|\Big(\int^2_1\sum_{j\in \mathbb{Z}}\big|K^j_t*\phi_{j-l}*h_j\big|^{2}{\rm d}t\Big)^{\frac{1}{2}}\Big\|_{L^p(\mathbb{R}^n,\,w)}\lesssim\Big\|\Big(\sum_{j\in\mathbb{Z}}|h_j|^{2}\Big)^{\frac{1}{2}}
\Big\|_{L^p(\mathbb{R}^n,\,w)},
$$
and so
\begin{eqnarray*}&&\Big\|\Big(\int^2_1\sum_{j}\Big|K^j_t*\phi_{j-l}*(S_{m-j}f)\Big|^2{\rm d}t\Big)^{\frac{1}{2}}
\Big\|_{L^p(\mathbb{R}^n,w)}\\
&&\quad\lesssim
\Big\|\Big(\sum_{j\in\mathbb{Z}}|S_{m-j}f|^2\Big)^{\frac{1}{2}}\Big\|_{L^p(\mathbb{R}^n,\,w)}
\lesssim
\|f\|_{L^p(\mathbb{R}^n,\,w)},\,p\in (1,\,2).\nonumber
\end{eqnarray*}
This, along with (2.12), states that  for $p\in (1,q)$,
\begin{eqnarray}\Big\|\Big(\int^2_1\sum_{j}\Big|K^j_t*\phi_{j-l}*(S_{m-j}f)\Big|^2{\rm d}t\Big)^{\frac{1}{2}}
\Big\|_{L^p(\mathbb{R}^n,w)}\lesssim
\|f\|_{L^p(\mathbb{R}^n,w)}.
\end{eqnarray}
Again by  interpolating,  the inequalities (2.11) and (2.15) give us that for
$p\in (1,\,q)$,
$$\Big\|\Big(\int^2_1\sum_{j}\Big|K^j_t*\phi_{j-l}*(S_{m-j}f)(x)\Big|^2{\rm d}t\Big)^{\frac{1}{2}}\Big\|_{L^p(\mathbb{R}^n,\,w)}\\
\lesssim 2^{t_p
m}\|f\|_{L^p(\mathbb{R}^n,\,w)}.
$$
with $t_p\in (0,\,1)$ a constant depending only on $p$. Therefore,
$$\|{\rm E}_1f\|_{L^p(\mathbb{R}^n,\,w)}\lesssim \|f\|_{L^p(\mathbb{R}^n,\,w)}.$$

We consider the term ${\rm E}_2$. Again by the
Plancherel theorem and the Fourier transform estimates (2.2) and
(2.5), we have that
\begin{eqnarray}
&&\Big\|\Big(\int^2_1\sum_{j\in\mathbb{Z}}\Big|K^j_t*\phi_{j-l}*(S_{m-j}f)(x)\Big|^2{\rm d}t\Big)^{\frac{1}{2}}\Big\|^2_{L^2(\mathbb{R}^n)}\\
&&\quad=\int^2_1\sum_{j\in\mathbb{Z}}\int_{\mathbb{R}^n}|\widehat{K_t^j}(\xi)|^2|\psi(2^{-m+j}\xi)|^2|\widehat{f}(\xi)|^2{\rm d}\xi{\rm d}t\nonumber\\
&&\quad\lesssim\sum_{j\in\mathbb{Z}}\int_{\mathbb{R}^n}|2^j\xi|^{-2\alpha}|2^{j-l}\xi|^{\alpha}\psi(2^{-m+j}\xi)|^2|\widehat{f}(\xi)|^2{\rm d}\xi\nonumber\\
&&\quad\lesssim2^{-m\alpha}\|f\|_{L^2(\mathbb{R}^n)}^2.\nonumber
\end{eqnarray}
As in the inequality (2.15), we have that
\begin{eqnarray}
\qquad\Big\|\Big(\int^2_1\sum_{j\in\mathbb{Z}}\Big|K^j_t*\phi_{j-l}*(S_{m-j}f)(x)\Big|^2{\rm d}t\Big)^{\frac{1}{2}}\Big\|_{L^p(\mathbb{R}^n,w)}
\lesssim
\|f\|_{L^p(\mathbb{R}^n,w)}.
\end{eqnarray}
Interpolating the inequalities (2.16) and (2.17) then shows that
\begin{eqnarray*}
\Big\|\Big(\int^2_1\sum_{j\in\mathbb{Z}}\Big|K^j_t*\phi_{j-l}*(S_{m-j}f)(x)\Big|^2{\rm d}t\Big)^{\frac{1}{2}}\Big\|_{L^p(\mathbb{R}^n,\,w)}\lesssim
2^{-t_pm}\|f\|_{L^p(\mathbb{R}^n,w)}.
\end{eqnarray*}
This  gives the desired estimate for ${\rm E}_2$. Combining the eastimates for ${\rm E}_1$ and ${\rm E}_2$ then yields (2.10) for the case $p\in (1,\,q)$ and  $w^{-1/(p-1)}\in A_{p'/q'}(\mathbb{R}^n)$.

We now prove (2.10) for the case of $p\in (q',\,\infty)$ and $w\in A_{p/q'}(\mathbb{R}^n)$. By a standard argument, it suffices to prove that
\begin{eqnarray}M^{\sharp}(\widetilde{\mathcal{M}}_{\Omega}^lf)(x)\lesssim lM_{q'}f(x),\end{eqnarray}
To prove (2.18), let $x\in\mathbb{R}^n$ and $Q$ be a cube containing $x$.
Decompose $f$ as $$f(y)=f(y)\chi_{4nQ}(y)+f(y)\chi_{\mathbb{R}^n\backslash 4nQ}(y)=:f_1(y)+f_2(y).$$
It is obvious that $\widetilde{\mathcal{M}}_{\Omega}^l$ is bounded on $L^{q'}(\mathbb{R}^n)$. Thus,
\begin{eqnarray}\frac{1}{|Q|}\int_{Q}\widetilde{\mathcal{M}}_{\Omega}^lf_1(y){\rm d}y\lesssim\Big(\frac{1}{|Q|}\int_{Q}\big\{\widetilde{\mathcal{M}}_{\Omega}^lf_1(y)\big\}^{q'}{\rm d}y\Big)^{1/q'}\lesssim M_{q'}f(x).\end{eqnarray}
Let $x_0\in Q$ such that $\widetilde{\mathcal{M}}_{\Omega}^lf_2(x_0)<\infty$. For $y\in Q$, it follows from Lemma \ref{l2.2} that
\begin{eqnarray}
&&\Big|\widetilde{\mathcal{M}}_{\Omega}^lf_2(y)-\widetilde{\mathcal{M}}_{\Omega}^lf_2(y_0)\Big|\\
&&\quad\lesssim \Big(\int^2_1\sum_{j\in\mathbb{Z}}\big|K^j_t*\phi_{j-l}*f_2(y)-K^j_t*\phi_{j-l}*f_2(x_0)\big|^2{\rm d}t\Big)^{\frac{1}{2}}\nonumber\\
&&\quad\lesssim\Big(\int^2_1\Big(\sum_{j\in\mathbb{Z}}\big|K^j_t*\phi_{j-l}*f_2(y)-K^j_t*\phi_{j-l}*f_2(x_0)\big|\Big)^2{\rm d}t\Big)^{\frac{1}{2}}\nonumber\\
&&\quad\lesssim lM_{q'}f(x).\nonumber
\end{eqnarray}
Combining the estimates (2.19) and (2.20) leads to that
$$\inf_{c\in\mathbb{C}}\frac{1}{|Q|}\int_{Q}\big|\widetilde{\mathcal{M}}_{\Omega}^lf(y)-c\big|{\rm d}y\lesssim  lM_{q'}f(x)$$
and then establish (2.18).

Finally, we see that (2.10) holds for the case of $p\in (1,\,\infty)$ and $w^{q'}\in A_{p}(\mathbb{R}^n)$, if we invoke the interpolation argument used in the proof of Theorem 2 in \cite{kw}. This completes the proof of Theorem \ref{t2.2}.\qed

\section{Proof of Theorem \ref{t1.2}}
We begin with some preliminary lemmas.
\begin{lemma}\label{l3.1}
Let $\Omega$ be
homogeneous of degree zero and belong to $L^1(S^{n-1})$, $K_t^j$ be defined as in (2.1). Then for
$l\in\mathbb{N}$, $t\in [1,\,2]$, $s\in (1,\,\infty]$, $j_0\in\mathbb{Z}_-$ and  $y\in \mathbb{R}^n$ with $|y|<2^{j_0-4}$,
\begin{eqnarray*}&&\sum_{j>j_0}\sum_{k\in\mathbb{Z}}2^{kn/s}\Big(\int_{2^k<|x|\leq 2^{k+1}}\big|K^j_{t}*\phi_{j-l}(x+y)-K^j_{t}*\phi_{j-l}(x)\big|^{s'}{\rm d}x\Big)^{\frac{1}{s'}}\\
&&\quad\lesssim 2^{l(n+1)}2^{-j_0}|y|.\nonumber\end{eqnarray*}
\end{lemma}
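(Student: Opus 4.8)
The plan is to exploit the smoothing supplied by the factor $\phi_{j-l}$. Since $\Omega$ lies only in $L^1(S^{n-1})$, the kernel $K^j_t$ admits no useful $L^{s'}$ bound for $s'>1$; but the convolution $K^j_t*\phi_{j-l}$ is smooth, and its increments can be controlled by placing the entire $L^{s'}$ loss on $\nabla\phi_{j-l}$ while keeping $K^j_t$ in $L^1$. Write $\Phi^j_t:=K^j_t*\phi_{j-l}$. By the mean value theorem,
$$\big|\Phi^j_t(x+y)-\Phi^j_t(x)\big|\le |y|\int_0^1\big(|K^j_t|*|\nabla\phi_{j-l}|\big)(x+\tau y)\,{\rm d}\tau=:|y|\,h_j(x),$$
so by Minkowski's integral inequality and translation invariance, $\|h_j\|_{L^{s'}(\mathbb{R}^n)}\le \big\||K^j_t|*|\nabla\phi_{j-l}|\big\|_{L^{s'}(\mathbb{R}^n)}$.

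First I would record two elementary estimates. Passing to polar coordinates $x=r\sigma$ in (2.1) and using homogeneity of $\Omega$ gives $\|K^j_t\|_{L^1(\mathbb{R}^n)}\lesssim \|\Omega\|_{L^1(S^{n-1})}$, uniformly in $j\in\mathbb{Z}$ and $t\in[1,2]$. A scaling computation (using $\nabla\phi_{j-l}(u)=2^{-(n+1)(j-l)}(\nabla\phi)(2^{-(j-l)}u)$ and $n/s'-n-1=-(n/s+1)$) gives $\|\nabla\phi_{j-l}\|_{L^{s'}(\mathbb{R}^n)}=2^{-(j-l)(n/s+1)}\|\nabla\phi\|_{L^{s'}(\mathbb{R}^n)}$. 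Young's inequality then yields
$$\big\||K^j_t|*|\nabla\phi_{j-l}|\big\|_{L^{s'}(\mathbb{R}^n)}\le \|K^j_t\|_{L^1(\mathbb{R}^n)}\,\|\nabla\phi_{j-l}\|_{L^{s'}(\mathbb{R}^n)}\lesssim \|\Omega\|_{L^1(S^{n-1})}\,2^{-(j-l)(n/s+1)}.$$

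Next come the support considerations, which convert these global $L^{s'}$ bounds into the annular sum. Since ${\rm supp}\,K^j_t\subset\{2^{j-1}\le|x|\le 2^{j+1}\}$ and ${\rm supp}\,\phi_{j-l}\subset\{|x|\le 2^{j-l-2}\le 2^{j-2}\}$, the function $\Phi^j_t$ is supported in $\{2^{j-2}<|x|<2^{j+2}\}$; and because $j>j_0$ forces $|y|<2^{j_0-4}\le 2^{j-5}$, the majorant $h_j$ is supported in $\{2^{j-3}<|x|<2^{j+3}\}$. Hence for each fixed $j$ only $O(1)$ dyadic annuli $\{2^k<|x|\le 2^{k+1}\}$ with $k$ near $j$ contribute, on which $2^{kn/s}\lesssim 2^{jn/s}$. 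Therefore the inner sum over $k$ is controlled by
$$2^{jn/s}\|h_j\|_{L^{s'}(\mathbb{R}^n)}\lesssim 2^{jn/s}\,2^{-(j-l)(n/s+1)}\|\Omega\|_{L^1(S^{n-1})}=2^{l(n/s+1)}\,2^{-j}\|\Omega\|_{L^1(S^{n-1})},$$
after multiplying through by the factor $|y|$ carried from the mean value theorem.

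Finally I would sum over $j>j_0$: the geometric series gives $\sum_{j>j_0}2^{-j}=2^{-j_0}$, and since $s\ge 1$ one has $n/s+1\le n+1$, so $2^{l(n/s+1)}\le 2^{l(n+1)}$, producing the claimed bound $\lesssim 2^{l(n+1)}2^{-j_0}|y|$. The $s=\infty$ (i.e. $s'=1$) endpoint is covered by the same computation with $n/s=0$. The main obstacle, and the organizational heart of the argument, is precisely that $\Omega\in L^1(S^{n-1})$ rules out any $L^{s'}$ estimate on $K^j_t$; the growth $2^{l(n+1)}$ is exactly the price paid for routing the full $L^{s'}$ norm through $\nabla\phi_{j-l}$, whose $L^{s'}$ norm blows up like $2^{l(n/s+1)}$ as the mollification scale $2^{j-l}$ shrinks.
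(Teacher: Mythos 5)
Your proof is correct: the pointwise bound $|K^j_t*\phi_{j-l}(x+y)-K^j_t*\phi_{j-l}(x)|\le |y|\,h_j(x)$ with $h_j$ controlled in $L^{s'}$ by $\|K^j_t\|_{L^1}\|\nabla\phi_{j-l}\|_{L^{s'}}\lesssim 2^{-(j-l)(n/s+1)}$ via Young's inequality, the support localization showing only $O(1)$ annuli $k\approx j$ contribute (which is where the hypotheses $j>j_0$ and $|y|<2^{j_0-4}$ enter), and the resulting geometric sum $\sum_{j>j_0}2^{l(n/s+1)}2^{-j}\le 2^{l(n+1)}2^{-j_0}$ give exactly the claimed estimate, with all exponent arithmetic checking out. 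The paper itself offers no proof of Lemma \ref{l3.1}, deferring instead to \cite{chenhu2}, and your argument (mollifier smoothness absorbing the full $L^{s'}$ loss while the rough kernel is used only in $L^1$) is precisely the standard route such references take, so there is nothing to correct.
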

For the proof of Lemma \ref{l3.1}, see \cite{chenhu2}.

For $t\in [1,\,2]$ and $j\in \mathbb{Z}$, let $K_t^j$ be defined as
in (2.1), $\phi$ and $\phi_l$ (with $l\in\mathbb{N}$) be the same as in
Section 2. For $b\in{\rm BMO}(\mathbb{R}^n)$,
let $\widetilde{\mathcal{M}}_{\Omega,\,b}^l$ be the commutator of
$\widetilde{\mathcal{M}}_{\Omega}^{l}$ defined by
$$\widetilde{\mathcal{M}}_{\Omega,\,b}^lf(x)=\Big(\int^2_1\sum_{j\in\mathbb{Z}}\big|F_{j,\,b}^lf(x,\,t)\big|^2{\rm d}t\Big)^{\frac{1}{2}},$$with
$$F_{j,\,b}^lf(x,\,t)=\int_{\mathbb{R}^n}\big(b(x)-b(y)\big)K^j_t*\phi_{j-l}(x-y)f(y)\,{\rm d}y.
$$
For $j_0\in\mathbb{Z}$, define the operator $\widetilde{\mathcal{M}}_{\Omega}^{l,\,j_0}$ by
$$\widetilde{\mathcal{M}}_{\Omega}^{l,\,j_0}f(x)=\Big(\int^2_1\sum_{j\in\mathbb{Z}:j> j_0}\big|F_{j,\,b}^lf(x,\,t)\big|^2{\rm d}t\Big)^{\frac{1}{2}},$$
and the commutator $\widetilde{\mathcal{M}}_{\Omega,\,b}^{l,\,j_0}$
$$\widetilde{\mathcal{M}}_{\Omega,\,b}^{l,\,j_0}f(x)=\Big(\int^2_1\sum_{j\in\mathbb{Z}:j> j_0}\big|F_{j,\,b}^lf(x,\,t)\big|^2{\rm d}t\Big)^{\frac{1}{2}},$$
with $b\in {\rm BMO}(\mathbb{R}^n)$.
\begin{lemma}\label{l3.2}
Let $\Omega$ be homogeneous of degree zero and integrable on $S^{n-1}$. Then for $b\in C^{\infty}_0(\mathbb{R}^n)$, $l\in \mathbb{N}$, $j_0\in\mathbb{Z}_-$,
$$\big|\widetilde{\mathcal{M}}_{\Omega,\,b}^{l,\,j_0}f(x)-\widetilde{\mathcal{M}}_{\Omega,\,b}^lf(x)\big|\lesssim 2^{j_0}MM_{\Omega}f(x).$$
\end{lemma}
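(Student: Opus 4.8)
The plan is to first reduce the difference of the two square functions to a tail in the $j$-summation, and then to gain a decaying factor $2^{j_0}$ by combining the Lipschitz regularity of $b$ with the localization of the kernel. Observe that both $\widetilde{\mathcal{M}}_{\Omega,\,b}^{l,\,j_0}f(x)$ and $\widetilde{\mathcal{M}}_{\Omega,\,b}^{l}f(x)$ are the norms of one and the same vector-valued element $\{F_{j,\,b}^lf(x,\,t)\}_{j\in\mathbb{Z}}$ of the Hilbert space $H=L^2\big([1,\,2];\,\ell^2(\mathbb{Z})\big)$, the first taken over the index set $\{j>j_0\}$ and the second over all of $\mathbb{Z}$. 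Hence the reverse triangle inequality in $H$ gives
$$\big|\widetilde{\mathcal{M}}_{\Omega,\,b}^{l,\,j_0}f(x)-\widetilde{\mathcal{M}}_{\Omega,\,b}^{l}f(x)\big|\le \Big(\int_1^2\sum_{j\le j_0}\big|F_{j,\,b}^lf(x,\,t)\big|^2{\rm d}t\Big)^{1/2},$$
so it suffices to estimate this tail.

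I would then extract a factor $2^j$ from each term. For fixed $x$, the $y$-integrand defining $F_{j,\,b}^lf(x,\,t)$ vanishes unless $K_t^j*\phi_{j-l}(x-y)\neq 0$; since $K_t^j$ is supported in $\{|u|\le 2^jt\}$ and $\phi_{j-l}$ in $\{|v|\le 2^{j-l}/4\}$, and $l\ge 0$, this forces $|x-y|\lesssim 2^j$. Because $b\in C^\infty_0(\mathbb{R}^n)$ is Lipschitz, $|b(x)-b(y)|\le\|\nabla b\|_\infty|x-y|\lesssim 2^j$ on this set, and therefore
$$\big|F_{j,\,b}^lf(x,\,t)\big|\lesssim 2^j\,\big(|K_t^j*\phi_{j-l}|*|f|\big)(x).$$

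Next I would prove the uniform pointwise domination $\big(|K_t^j*\phi_{j-l}|*|f|\big)(x)\lesssim MM_\Omega f(x)$, with constant independent of $j$, $t$ and $l$. Since $\phi\ge 0$ one has $|K_t^j*\phi_{j-l}|\le |K_t^j|*\phi_{j-l}$, so the left-hand side is at most $\phi_{j-l}*\big(|K_t^j|*|f|\big)(x)$. Using that $|u|>2^{j-1}$ on the support of $K_t^j$, a direct computation bounds $|K_t^j|*g$ by $M_\Omega g$ uniformly, while $\phi_{j-l}*h\lesssim Mh$ because $\phi_{j-l}$ is a nonnegative $L^1$-normalized bump; chaining these two domination estimates produces $\phi_{j-l}*\big(|K_t^j|*|f|\big)\lesssim \phi_{j-l}*(M_\Omega f)\lesssim MM_\Omega f$, with constants depending only on $n$ and $\|\Omega\|_{L^1(S^{n-1})}$.

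Finally I would insert these bounds into the tail, sum the geometric series $\sum_{j\le j_0}2^{2j}\lesssim 2^{2j_0}$, and integrate the resulting $t$-independent bound over $[1,\,2]$, which has measure one, to obtain $2^{j_0}MM_\Omega f(x)$ and hence the claim. The one genuinely delicate step is the second one: it is essential to use both the Lipschitz bound for $b$ and the compact support of $K_t^j*\phi_{j-l}$ simultaneously, since the factor $2^j$ they produce is exactly what renders the tail geometrically summable and yields the decay $2^{j_0}$; the maximal-function dominations of the third step are routine and already implicit in the proofs of Lemma \ref{l2.4} and Theorem \ref{t2.2}.
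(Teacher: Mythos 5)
Your proposal is correct and follows essentially the same route as the paper: reduce the difference to the tail $\sum_{j\le j_0}$ via the triangle inequality, use the Lipschitz bound on $b$ together with the support of $K_t^j*\phi_{j-l}$ (contained in $\{|u|\approx 2^j\}$) to extract a factor $2^j$, dominate the remaining convolution by a composition of $M$ and $M_{\Omega}$, and sum geometrically to get $2^{j_0}$. The only cosmetic differences are that the paper keeps $|x-y|$ inside the integral and decomposes into annuli (summing in $\ell^1$ rather than $\ell^2$), and its proof lands on $M_{\Omega}Mf$ instead of $MM_{\Omega}f$; your chaining $|K_t^j*\phi_{j-l}|*|f|\le\phi_{j-l}*(|K_t^j|*|f|)\lesssim MM_{\Omega}f$ actually matches the lemma's stated form exactly.
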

\begin{proof} Let $b\in C^{\infty}_0(\mathbb{R}^n)$ with $\|\nabla b\|_{L^{\infty}(\mathbb{R}^n)}=1$. For $t\in [1,\,2]$, by the fact that ${\rm supp}\,K_t^j*\phi_{j-l}\subset
\{x:\,2^{j-2}\leq |x|\leq 2^{j+2}\}$, it is easy to verify that
\begin{eqnarray*}&&\sum_{j\leq j_0}\int_{\mathbb{R}^n}\big|K_{t}^j*\phi_{j-l}(x-y)\big||x-y||f(y)|{\rm d}y\\
&&\quad\lesssim
\sum_{j\leq j_0}\sum_{k\in\mathbb{Z}}2^k\int_{2^k<|x-y|\leq
2^{k+1}}\big|K_{t}^j*\phi_{j-l}(x-y)\big||f(y)|{\rm
d}y\\
&&\quad\lesssim
\sum_{j\leq j_0}\sum_{|k-j|\leq 3}2^k\int_{2^k<|x-y|\leq
2^{k+1}}\big|K_{t}^j*\phi_{j-l}(x-y)\big||f(y)|{\rm
d}y\\
&&\quad\lesssim2^{j_0} M_{\Omega}Mf(x).
\end{eqnarray*}
Thus, \begin{eqnarray*}&&\Big|\widetilde{\mathcal{M}}_{\Omega,\,b}^{l,\,j_0}f(x)-\widetilde{\mathcal{M}}_{\Omega,\,b}^lf(x)\Big|^2\\
&&\quad\leq \sum_{j<j_0}\int^2_1\Big|\int_{\mathbb{R}^n}\big(b(x)-b(y)\big)K^j_t*\phi_{j-l}(x-y)f(y)\Big|^2{\rm d}t\\
&&\quad\lesssim\int^2_1\Big(\sum_{j\leq j_0}\int_{\mathbb{R}^n}|x-y|\big|K_{t}^j*\phi_{j-l}(x-y)f(y)|{\rm
d}y\Big)^2{\rm d}t\\
&&\quad\lesssim \{2^{j_0} M_{\Omega}Mf(x)\}^2.
\end{eqnarray*}
 The desired conclusion now follows immediately.
\end{proof}

Let $p,\,r\in[1,\,\infty)$, $q\in [1,\,\infty]$ and $w$ be a weight,
$L^{p}(L^q([1,\,2]),\,l^r;\,\mathbb{R}^n,\,w)$ be the space of
sequences of functions defined by
$$L^{p}(L^q([1,\,2]),\,l^r;\,\mathbb{R}^n,\,w)=\big\{\vec{f}=\{f_k\}_{k\in \mathbb{Z}}:\, \|\vec{f}\|_
{L^{p}(L^q([1,\,2]),\,l^r;\,\mathbb{R}^n,\,w)}<\infty\big\},$$ with
$$\|\vec{f}\|_{L^{p}(L^q([1,\,2]),\,l^r;\,\mathbb{R}^n,\,w)}=\Big\|\Big(\int^2_1\Big(\sum_{k\in
\mathbb{Z}}|f_k(x,\,t)|^r\Big)^{\frac{q}{r}}{\rm
d}t\Big)^{1/q}\Big\|_{L^{p}(\mathbb{R}^n,\,w)}.$$ With usual
addition and scalar multiplication,
$L^{p}(L^q([1,\,2]),\,l^{r};\,\mathbb{R}^n,\,w)$ is a Banach space.

\begin{lemma}\label{l3.4}
Let $p\in (1,\,\infty)$ and $w\in A_p(\mathbb{R}^n)$,
$\mathcal{G}\subset L^p(L^2([1,\,2]),\,l^{2};\,\mathbb{R}^n,\,w)$.
Suppose that $\mathcal{G}$ satisfies the following five conditions:
\begin{itemize}
\item[\rm (a)] $\mathcal{G}$ is bounded, that is, there exists a constant $C$ such that  for all $\{f_k\}_{k\in\mathbb{Z}}\in
\mathcal{G}$,
$\|\vec{f}\|_{L^p(L^2([1,\,2]),\,l^{2};\,\mathbb{R}^n,\,w)}\leq C$;
\item[\rm (b)] for each fixed $\epsilon>0$, there exists a constant $A>0$, such that for all $\{f_k\}_{k\in\mathbb{Z}}\in
\mathcal{G}$,
$$\Big\|\Big(\int^2_1\sum_{k\in\mathbb{Z}}|f_k(\cdot,\,t)|^2{\rm d}t\Big)^{\frac{1}{2}}\chi_{\{|\cdot|>A\}}(\cdot)\Big\|_{
L^p(\mathbb{R}^n,\,w)}<\epsilon;$$

\item[\rm (c)] for each fixed $\epsilon>0$ and $N\in\mathbb{N}$, there exists a constant $\varrho>0$, such that for all
$\{f_k\}_{k\in\mathbb{Z}}\in \mathcal{G}$,
$$\Big\|\sup_{|h|\leq \varrho}\Big(\int^2_1
\sum_{|k|\leq N}|f_k(x,\,t)-f_k(x+h,\,t)|^2{\rm
d}t\Big)^{\frac{1}{2}}\Big\|_{L^{p}(\mathbb{R}^n,\,w)}< \epsilon;$$
\item[\rm (d)]for each fixed $\epsilon>0$ and $N\in\mathbb{N}$, there exists a constant
$\sigma\in (0,\,1/2)$ such that for all
$\{f_k\}_{k\in\mathbb{Z}}\in \mathcal{G}$,
$$\Big\|\sup_{|s|\leq \sigma}\Big(\int^2_1
\sum_{|k|\leq N}|f_k(\cdot,\,t+s)-f_k(\cdot,\,t)|^2{\rm
d}t\Big)^{\frac{1}{2}}\Big\|_{L^{p}(\mathbb{R}^n,\,w)}< \epsilon,$$
\item[\rm (e)] for each fixed $D>0$ and $\epsilon>0$, there exists $N\in\mathbb{N}$
such that for all $\{f_k\}_{k\in\mathbb{Z}}\in \mathcal {G}$,
$$\Big\|\Big(\int^2_1\sum_{|k|>N}|f_k(\cdot,\,t)|^2{\rm d}t\Big)^{\frac{1}{2}}\chi_{B(0,\,D)}\Big\|_{L^p(\mathbb{R}^n,\,w)}<\epsilon.$$
\end{itemize}
Then $\mathcal{G}$ is  a strongly pre-compact set in
$L^p(L^2([1,\,2]),\,l^{2};\,\mathbb{R}^n,\,w)$.
\end{lemma}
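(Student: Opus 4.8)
The plan is to regard each $\vec f=\{f_k\}_{k\in\mathbb{Z}}$ in $L^p(L^2([1,2]),l^2;\mathbb{R}^n,w)$ as a Bochner-type function $x\mapsto\vec f(x)$ taking values in the separable Hilbert space $\mathcal H:=L^2([1,2];l^2)$, so that the norm is simply the weighted Lebesgue norm $\big\||\vec f(\cdot)|_{\mathcal H}\big\|_{L^p(\mathbb{R}^n,w)}$ of the fibrewise $\mathcal H$-norm. The goal is then total boundedness of $\mathcal G$, which I would obtain through a weighted, vector-valued analogue of the Fr\'echet--Kolmogorov--Riesz theorem: for each $\epsilon>0$ I will produce a totally bounded set lying within distance $\epsilon$ of $\mathcal G$, and since a set contained in the $\epsilon$-neighbourhood of a totally bounded set for every $\epsilon$ is itself totally bounded, this suffices. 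The five hypotheses are exactly tailored to this scheme: (a) is the uniform bound, (b) controls the tail in the space variable $x$, (c) and (d) give equicontinuity in $x$ and in the parameter $t$ respectively, and (e) controls the $l^2$-tail; crucially, (d) and (e) together are what provide compactness in the infinite-dimensional fibre $\mathcal H$, which is the feature absent from the scalar theorem.

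First I would introduce a regularizing operator $T=T_{N,\varrho,\sigma}$ with three parameters: truncate the index set to $|k|\le N$, mollify in $x$ by convolution with a smooth bump $\eta_\varrho$ supported in $\{|y|\le\varrho\}$, and smooth in $t$ by convolution with a smooth kernel of width $\sigma$ on $[1,2]$. The approximation step is to show $\|\vec f-T\vec f\|\lesssim\epsilon$ uniformly over $\mathcal G$, splitting the error into three pieces in the order the three operations are applied. The $k$-tail error $\{|k|>N\}$ is handled by choosing $A$ via (b) and then $N$ via (e) with $D=A$, since on $\{|x|>A\}$ the tail norm is dominated by the full norm controlled by (b), while on $B(0,A)$ it is controlled by (e). The $x$-mollification error is estimated by writing $f_k*\eta_\varrho-f_k=\int\eta_\varrho(h)\big(f_k(\cdot-h)-f_k(\cdot)\big)\,dh$ and applying Minkowski's integral inequality in $L^p(w)$, which reduces it to the modulus in (c); the $t$-smoothing error is treated identically by (d). Here it is essential that (c) and (d) are stated with the finite sum over $|k|\le N$, matching the fact that these errors are measured after truncation.

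Next I would prove that, for fixed parameters, the regularized family $T\mathcal G$ is totally bounded in $L^p(\mathbb{R}^n,w;\mathcal H)$. The finite index range together with the two smoothings makes each $T\vec f$ regular: differentiating $\eta_\varrho$ yields a uniform Lipschitz bound in $x$, while the smooth $t$-kernel yields, for each fixed $x$, a set of fibre values $\{T\vec f(x)\}$ that is uniformly bounded and equi-Lipschitz in $t$ in $l^2(\{|k|\le N\})$, hence relatively compact in $\mathcal H_N\cong L^2([1,2])^{2N+1}\hookrightarrow\mathcal H$ by the finite-dimensional Arzel\`a--Ascoli theorem. A vector-valued Arzel\`a--Ascoli argument on the ball $B(0,A)$ then gives relative compactness of $T\mathcal G$ in $C(B(0,A);\mathcal H)$. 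To pass from uniform convergence to convergence in $L^p(B(0,A),w;\mathcal H)$ I would use dominated convergence together with $w(B(0,A))<\infty$; off $B(0,A)$ the contribution is uniformly small by (b) and the approximation step. Total boundedness on the ball plus uniform smallness of the tail yields total boundedness of $T\mathcal G$, and combined with the approximation step this finishes the proof.

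The main obstacle is the $A_p$ weight. The Arzel\`a--Ascoli machinery naturally produces uniform and unweighted estimates, so the delicate point is to make the pointwise and Lipschitz bounds on $T\vec f$ uniform over $\mathcal G$ and over compacta while respecting $w$, and to justify the dominated-convergence transfer into $L^p(w)$. The key is that $w\in A_p(\mathbb{R}^n)$ forces $w^{-1/(p-1)}$ to be locally integrable, so that Hölder's inequality with the weight converts the local $L^1$-average defining the mollification into the weighted norm bounded by (a), with a constant depending on $B(0,A)$, $\varrho$, $\sigma$, $N$ but not on $\vec f$. Verifying these uniform fibrewise bounds and the legitimacy of the weighted dominated-convergence passage is the technical heart; once they are in place, the remainder is bookkeeping of the three approximation errors.
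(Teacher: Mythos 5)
Your argument is correct, but it realizes the underlying Kolmogorov--Riesz scheme by a genuinely different mechanism than the paper. The parameter choices are made in the same order in both proofs ($A$ from (b), then $N$ from (e) with $D=A$, then $\varrho,\sigma$ from (c), (d) for that $N$), and both proofs conclude by placing $\mathcal{G}$ within $C\epsilon$ of a totally bounded set; the difference is in how that set is produced. You regularize by truncation plus mollification in $x$ and $t$, so compactness of $T\mathcal{G}$ requires a vector-valued Arzel\`a--Ascoli argument on $B(0,A)$, fed by uniform fibre bounds extracted from hypothesis (a) and the local integrability of $w^{-1/(p-1)}$, plus the transfer back into $L^p(w)$ using $w(B(0,A))<\infty$. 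The paper instead discretizes: it tiles $\overline{B(0,A)}$ by finitely many congruent cubes $Q_i$ with $2Q\subset B(0,\varrho)$, partitions $[1,2]$ into intervals $I_j$ of length at most $\sigma$, and defines a map $\Phi_{\epsilon}$ replacing each $f_k$ with $|k|\le N$ by its averages $m_{Q_i\times I_j}(f_k)$ and killing all other components. The approximation estimate for $\Phi_\epsilon$ is obtained exactly as in your proof, by dominating the error pointwise by the moduli in (c) and (d), but the image of $\Phi_{\epsilon}$ sits in one fixed finite-dimensional subspace, so its precompactness follows from boundedness alone --- no Arzel\`a--Ascoli, no uniform pointwise bounds, no convergence-transfer step (the paper proves weighted boundedness of $\Phi_{\epsilon}$ by H\"older for $p\ge 2$ and by interpolation for $1<p<2$, then invokes Lemma 6 of \cite{ccp}, which is precisely the $\epsilon$-net principle you state). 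A further small advantage of the discretization is that averaging over $I_j\subset[1,2]$ never evaluates the functions outside $[1,2]$, whereas your mollification in $t$ does; since hypothesis (d) only controls increments within the interval, you need a one-sided or reflected kernel near the endpoints --- a fixable point, but one your write-up should address explicitly.
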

\begin{proof} We employ the argument used in the proof of \cite[Theorem 5]{ccp}, with some refined modifications.
Our goal is to prove that,  for each fixed $\epsilon>0$, there
exists a $\delta=\delta_{\epsilon}>0$ and a mapping
$\Phi_{\epsilon}$ on $L^p(L^2([1,\,2]),\,l^{2};\,\mathbb{R}^n,\,w)$,
such that
$\Phi_{\epsilon}(\mathcal{G})=\{\Phi_{\epsilon}(\vec{f}):\,\vec{f}\in
\mathcal G\}$ is a strong pre-compact set in the space
$L^p(L^2([1,\,2]),\,l^{2};\,\mathbb{R}^n,\,w)$, and for any
$\vec{f}$, $\vec{g}\in \mathcal{G}$,
\begin{eqnarray}&&\|\Phi_{\epsilon}(\vec{f})-\Phi_{\epsilon}(\vec{g})\|_{L^p(L^2([1,2]),l^{2};\,\mathbb{R}^n,w)}<\delta\\
&&\quad\Rightarrow
\|\vec{f}-\vec{g}\|_{L^p(L^2([1,2]),l^{2};\,\mathbb{R}^n,w)}<8\epsilon.\nonumber\end{eqnarray}
If we can prove this, then by Lemma 6 in \cite{ccp}, we see that
$\mathcal{G}$ is a strongly pre-compact set in
$L^p(L^2([1,\,2]),\,l^{2};\,\mathbb{R}^n,\,w)$.

Now let $\epsilon>0$. We choose  $A>1$ large enough as in assumption
(b), $N\in\mathbb{N}$ such that for all $\{f_k\}_{k\in\mathbb{Z}}\in
\mathcal {G}$,
$$\Big\|\Big(\int^2_1\sum_{|k|>N}|f_k(\cdot,\,t)|^2{\rm d}t\Big)^{1/2}\chi_{B(0,\,2A)}
\Big\|_{L^p(\mathbb{R}^n,\,w)}<\epsilon.$$ Let $\varrho\in
(0,\,1/2)$ small enough as in assumption (c) and $\sigma\in
(0,\,1/2)$ small enough such that (d) holds true. Let $Q$ be the
largest cube centered at the origin such that $2Q\subset
B(0,\,\varrho)$, $Q_1,\,\dots,\,Q_J$ be $J$ copies of $Q$ such that
they are non-overlapping, and $\overline{B(0,\,A)}\subset
\overline{\cup_{j=1}^JQ_j}\subset B(0,\,2A)$. Let
$I_1,\,\dots,\,I_L\subset [1,\,2]$ be non-overlapping intervals with
same length $|I|$, such that $|s-t|\leq \sigma$ for all $s,\,t\in
I_j$ $(j=1,\,\dots,\,L)$ and $\cup_{j=1}^NI_j=[1,\,2]$. Define the
mapping $\Phi_{\epsilon}$ on
$L^p(L^2([1,\,2]),\,l^{2};\,\mathbb{R}^n,\,w)$   by
\begin{eqnarray*}\Phi_{\epsilon}(\vec{f})(x,\,t)&=&
\Big\{\dots,0,\,\,\dots,\,0,\,\sum_{i=1}^J\sum_{j=1}^Lm_{Q_i\times I_j}(f_{-N})\chi_{Q_i\times I_j}(x,t),\\
&&\quad\sum_{i=1}^J\sum_{j=1}^Lm_{Q_i\times I_j}(f_{-N+1})\chi_{Q_i\times
I_j}(x,t),\dots,\\
&&\quad\sum_{i=1}^J\sum_{j=1}^Lm_{Q_i\times
I_j}(f_{N})\chi_{Q_i\times I_j}(x,t),0,\dots\Big\},\end{eqnarray*}
where and in the following, $$m_{Q_i\times
I_j}(f_k)=\frac{1}{|Q_i|}\frac{1}{|I_j|}\int_{Q_i\times
I_j}f_k(x,\,t){\rm d}x{\rm d}t.$$

We claim that $\Phi_{\epsilon}$ is bounded on
$L^p(L^2([1,\,2]),\,l^{2};\,\mathbb{R}^n,\,w)$. In fact, if  $p\in
[2,\,\infty)$, we have by the H\"older inequality that
\begin{eqnarray*}|m_{Q_i\times I_j}(f_k)|&\leq &\Big(\frac{1}{|Q_i||I_j|}\int_{I_j\times Q_i}|f_k(y,\,t)|^pw(y)
{\rm d}y{\rm
d}t\Big)^{\frac{1}{p}}\\
&&\quad\times\Big(\frac{1}{|Q_i|}\int_{Q_i}w^{-\frac{1}{p-1}}(y){\rm
d}y\Big)^{\frac{1}{p'}} ,\end{eqnarray*}and
\begin{eqnarray*}
&&\sum_{|k|\leq
N}\Big(\frac{1}{|Q_i||I_j|}\int_{I_j}\int_{Q_i}|f_k(y,\,t)|^pw(y)
{\rm d}y{\rm d}t\Big)^{2/p}\\
&&\quad \lesssim N^{1-2/p}\Big(\sum_{|k|\leq
N}\frac{1}{|Q_i||I_j|}\int_{I_j\times Q_i}|f_k(y,\,t)|^pw(y) {\rm
d}y{\rm d}t\Big)^{2/p}.
\end{eqnarray*}
Therefore,
\begin{eqnarray*}
\|\Phi_{\epsilon}(\vec{f})\|_{L^{p}(L^2([1,\,2]),l^{2};\mathbb{R}^n,w)}^{p}
&=&
\sum_{i=1}^J\sum_{j=1}^L\int_{I_j}\int_{Q_i}\Big(\sum_{|k|\leq
N}|m_{Q_i\times I_j}(f_k)|^2\Big)^{p/2}w(x){\rm d}x{\rm d}t\\
&\lesssim&
N^{p/2-1}\sum_{i=1}^J\sum_{j=1}^L\int_{I_j}\int_{Q_i}\sum_{|k|\leq
N}|f_k(y,t)|^pw(y){\rm d}y{\rm d}t
\\
&\leq&
N^{p/2}\sum_{i=1}^J\sum_{j=1}^L\int_{I_j}\int_{Q_i}\Big\{\sum_{|k|\leq
N}|f_k(y,t)|^2\Big\}^{\frac{p}{2}}w(y){\rm d}y{\rm d}t
\\
&\leq &N^{p/2}\|\vec{f}\|_{L^{p}(L^2([1,\,2]),\,l^2;\,\mathbb{R}^n,\,w)}^{p}.\nonumber
\end{eqnarray*}
On the other hand, for $p\in (1,\,2)$ and $w\in
A_{p}(\mathbb{R}^n)$, we choose $\gamma\in (0,\,1)$ such that $w\in
A_{p-\gamma}(\mathbb{R}^n)$. Note that
\begin{eqnarray*}
&&\sup_{-N\leq k\leq N}\sup_{t\in
[1,\,2]}\Big|\sum_{i=1}^J\sum_{j=1}^Lm_{Q_i\times
I_j}(f_{k})\chi_{Q_i\times I_j}(x,t)\Big|\lesssim\sup_{k\in
\mathbb{Z}}\sup_{t\in [1,\,2]}|f_k(x,\,t)|,
\end{eqnarray*}
which implies that for $p_1\in (1,\,\infty)$,
\begin{eqnarray}\|\Phi_{\epsilon}(\vec{f})\|_{L^{p_1}(L^{\infty}([1,\,2]),\,l^{\infty};\,\mathbb{R}^n,\,w)}\lesssim
\|\vec{f}\|_{L^{p_1}(L^{\infty}([1,\,2]),\,l^{\infty};\,\mathbb{R}^n,\,w)}.\end{eqnarray}
We also have that for $p_0=p-\gamma$,$$|m_{Q_i\times I_j}(f_k)|\leq
\Big(\frac{1}{|Q_i||I_j|}\int_{I_j}\int_{Q_i}|f_k(y,t)|^{p_0}w(y){\rm
d}y{\rm
d}t\Big)^{\frac{1}{p_0}}\Big(\frac{1}{|Q_i|}\int_{Q_i}w^{-\frac{1}{p_0-1}}(y){\rm
d}y\Big)^{\frac{1}{p_0'}} ,$$and so
\begin{eqnarray}\|\Phi_{\epsilon}(\vec{f})\|_{L^{p_0}(L^{p_0}([1,\,2]),\,l^{p_0};\,\mathbb{R}^n,\,w)}\lesssim
\|\vec{f}\|_{L^{p_0}(L^{p_0}([1,\,2]),\,l^{p_0};\,\mathbb{R}^n,\,w)}.\end{eqnarray}
By interpolation, we can deduce from (3.2) and (3.3) that in this
case
\begin{eqnarray*}
\|\Phi_{\epsilon}(\vec{f})\|_{L^{p}(L^2([1,\,2]),\,l^{2};\,\mathbb{R}^n,\,w)}\lesssim\|\vec{f}\|_{L^{p}(L^2([0,\,1]),\,l^2,\,\mathbb{R}^n,\,w)}^{p}.
\end{eqnarray*}
Our claim then follows directly, and so
$\Phi_{\epsilon}(\mathcal{G})=\{\Phi_{\epsilon}(\vec{f}): \vec{f}\in
\mathcal{G}\}$ is  strongly pre-compact  in
$L^p(L^2([1,\,2]),l^{2};\mathbb{R}^n,\,w)$.

We now verify (3.1). Denote $\mathcal{D}=\cup_{i=1}^JQ_i$ and write
\begin{eqnarray*}&&\big\|\vec{f}\chi_{\mathcal{D}}-\Phi_{\epsilon}(\vec{f})
\big\|_{L^p(L^2([1,\,2]),\,l^{2};\,\mathbb{R}^n,\,w)}\\&&\quad\leq
\Big\|\Big(\int^2_1\sum_{|k|\leq
N}\Big|f_k(\cdot,\,t)\chi_{\mathcal{D}}-\sum_{i=1}^{J}\sum_{j=1}^{L}m_{Q_i\times
I_j}
(f_k)\chi_{Q_i\times I_j}(x,t)\Big|^2{\rm d}t\Big)^{1/2}\Big\|_{L^p(\mathbb{R}^n,\,w)}\\
&&\qquad+\Big\|\Big(\int^2_1\sum_{|k|>N}\big|f_k(\cdot,\,t)\big|^2\Big)^{\frac{1}{2}}\chi_{B(0,\,2A)}\Big\|_{L^p(\mathbb{R}^n,\,w)}.
\end{eqnarray*}
Noting that for $x\in Q_i$ with $1\leq i\leq J$,
\begin{eqnarray*}&&\Big\{\int^2_1\sum_{|k|\leq
N}\big|f_k(x,\,t)\chi_{\mathcal{D}}(x)-\sum_{u=1}^{J}\sum_{v=1}^Lm_{Q_u\times
I_v}(f_k)\chi_{Q_u\times I_v}(x,t)\big|^2{\rm
d}t\Big\}^{\frac{1}{2}}\\
&&\quad\lesssim |Q|^{-1/2}|I|^{-1/2}
\Big\{\sum_{j=1}^L\int_{I_j}\int_{Q_i}\int_{I_j}\sum_{|k|\leq
N}\big|f_k(x,\,t)-f_k(y,\,s)\big|^2\,{\rm d}y
{\rm d}s{\rm d}t\Big\}^{\frac{1}{2}}\\
&&\quad\lesssim |Q|^{-1/2}\Big\{\int_{2Q}\int_{1}^2 \sum_{|k|\leq
N}|f_k(x,\,s)-f_k(x+h,\,s)|^2{\rm d}s\,{\rm
d}h\Big\}^{\frac{1}{2}}\\
&&\qquad+ |I|^{-1/2}\Big\{\sum_{j=1}^L\int_{I_j}\int_{I_j}
\sum_{|k|\leq N}|f_k(x,\,t)-f_k(x,\,s)|^2{\rm d}t\,{\rm
d}s\Big\}^{\frac{1}{2}}\\
&&\quad\lesssim\sup_{|h|\leq \varrho}\Big(\int^2_1
\sum_{|k|\leq N}|f_k(x,\,t)-f_k(x+h,\,t)|^2{\rm
d}t\Big)^{\frac{1}{2}}\\
&&\qquad+\sup_{|s|\leq \sigma}\Big(\int^2_1
\sum_{|k|\leq N}|f_k(x,\,t+s)-f_k(x,\,t)|^2{\rm
d}t\Big)^{\frac{1}{2}},
\end{eqnarray*} we then get that
\begin{eqnarray*}\sum_{i=1}^J\int_{Q_i}\Big\{\int^2_1
\sum_{|k|\leq N}\big|f_k(x,\,t)-\sum_{l=1}^{J}m_{Q_l}(f_{k})
\chi_{Q_l}(x)\big|^2\,{\rm d}t\Big\}^{p/2}w(x)\,{\rm d}x \lesssim
2\epsilon.
\end{eqnarray*}
It then follows from the  assumption (b)  that for all $\vec{f}\in
\mathcal{G}$,
\begin{eqnarray*}
\|\vec{f}-\Phi_{\epsilon}(\vec{f})\|_{L^p(L^2([1,\,2]),\,l^{2};\,\mathbb{R}^n,\,w)}&\leq&
\big\|\vec{f}\chi_{\mathcal{D}}-\Phi_{\epsilon}(\vec{f})
\big\|_{L^p(L^2([1,\,2]),\,l^{2};\,\mathbb{R}^n,\,w)}\\
&+&\Big\|\Big(\int^2_1\sum_{k\in\mathbb{Z}}|f_k(\cdot,t)|^2{\rm
d}t\Big)^{\frac{1}{2}}\chi_{\{|\cdot|>A\}}(\cdot)\Big\|_{L^p(\mathbb{R}^n,w)}\\
&<&3\epsilon. \end{eqnarray*} Noting that
\begin{eqnarray*}
\|\vec{f}-\vec{g}\|_{L^p(L^2([1,\,2]),\,l^{2};\,\mathbb{R}^n,\,w)}&\leq&
\|\vec{f}-\Phi_{\epsilon}(\vec{f})\|_{L^p(L^2([1,\,2]),\,l^{2};\,\mathbb{R}^n,\,w)}\\
&&+\|\Phi_{\epsilon}(\vec{f})-\Phi_{\epsilon}(\vec{g})\|_{L^p(L^2([1,\,2]),l^{2};\mathbb{R}^n,w)}\\
&&+\|\vec{g}-\Phi_{\epsilon}(\vec{g})\|_{L^p(L^2([1,\,2]),\,l^{2};\,\mathbb{R}^n,\,w)},\end{eqnarray*}
we then get (3.1) and finish the proof of Lemma \ref{l3.4}.\end{proof}

{\it Proof of Theorem \ref{t1.2}}. Let $j_0\in\mathbb{Z}_-$, $b\in
C^{\infty}_0(\mathbb{R}^n)$ with ${\rm supp}\, b\subset B(0,\,R)$,
$p$ and $w$ be the same as in Theorem \ref{t1.2}. Without loss of generality, we
may assume that $\|b\|_{L^{\infty}(\mathbb{R}^n)}+\|\nabla
b\|_{L^{\infty}(\mathbb{R}^n)}=1.$ We claim that
\begin{itemize}
\item[\rm (i)] for each fixed $\epsilon>0$, there exists a constant $A>0$ such that
$$\Big\|\Big(\int^2_1\sum_{j\in\mathbb{Z}}|F_{j,\,b}^{l}f(x,\,t)|^2{\rm d}t\Big)^{1/2}\chi_{\{|\cdot|>A\}}(\cdot)\Big\|_{L^p(\mathbb{R}^n,\,w)}<\epsilon\|f\|_{L^p(\mathbb{R}^n,\,w)};$$
\item[\rm (ii)]for $s\in (1,\,\infty)$,\begin{eqnarray}&&
\Big(\int^2_1
\sum_{j>j_0}|F_{j,\,b}^{l}f(x,\,t)-F_{j,\,b}^{l}f(x+h,\,t)|^2{\rm
d}t\Big)^{1/2}\\
&&\quad\lesssim
2^{-j_0}|h|\Big(\widetilde{\mathcal{M}}_{\Omega}^{l,\,j_0}f(x)+2^{l(n+1)}M_sf(x)\Big);\nonumber\end{eqnarray}
\item[\rm (iii)] for each $\epsilon>0$ and $N\in\mathbb{N}$, there exists a
constant $\sigma\in (0,\,1/2)$ such that
\begin{eqnarray}&&\Big\|\sup_{|s|\leq \sigma}\Big(\int^2_1
\sum_{|j|\leq
N}|F_{j,b}^{l}f(x,s+t)-F_{j,b}^{l}f(x,t)|^2{\rm
d}t\Big)^{\frac{1}{2}}\Big\|_{L^{p}(\mathbb{R}^n,w)}\\
&&\quad< \epsilon\|f\|_{L^p(\mathbb{R}^n,w)};\nonumber\end{eqnarray}
\item[\rm (iv)]for each fixed $D>0$ and $\epsilon>0$, there exists $N\in\mathbb{N}$
such that
\begin{eqnarray}\Big\|\Big(\int^2_1\sum_{j>N}|F_{j,\,b}^{l}f(\cdot,\,t)|^2{\rm d}t\Big)^{1/2}\chi_{B(0,\,D)}\Big\|_{L^p(\mathbb{R}^n,\,w)}<\epsilon\|f\|_{L^p(\mathbb{R}^n,\,w)}.\end{eqnarray}
\end{itemize}

We now prove claim (i).  Let $t\in [1,\,2]$. For each fixed
$x\in\mathbb{R}^n$ with $|x|>4R$, observe that ${\rm
supp}\,K_t^j*\phi_{j-l}\subset \{2^{j-2}\leq|y|\leq 2^{j+2}\}$, and
$\int_{|z|<R}\big|K_{t}^j*\phi_{j-l}(x-z)\big|{\rm d}z\not =0$ only
if $2^j\approx |x|$. A trivial computation shows that
\begin{eqnarray*}
\int_{|z|<R}\big|K_{t}^j*\phi_{j-l}(x-z)\big|{\rm
d}z&\lesssim&\Big(\int_{|z|<R}\big|K_{t}^j*\phi_{j-l}(x-z)\big|^2{\rm
d}z\Big)^{\frac{1}{2}}R^{\frac{n}{2}}\\
&\lesssim&\Big(\int_{\frac{|x|}{2}\leq
|z|<2|x|}\big|K_{t}^j*\phi_{j-l}(z)\big|^2{\rm
d}z\Big)^{\frac{1}{2}}R^{\frac{n}{2}}\\
&\lesssim&\|K_t^j\|_{L^1(S^{n-1})}\|\phi_{j-l}\|_{L^2(\mathbb{R}^n)}R^{\frac{n}{2}}\\
&\lesssim&2^{nl/2}|x|^{-\frac{n}{2}}R^{\frac{n}{2}}.
\end{eqnarray*}On the other hand,  we have that
\begin{eqnarray*}
&&\sum_{j\in\mathbb{Z}}\Big(\int_{|y|<R}|K_{t}^j*\phi_{j-l}(x-y)||f(y)|^s{\rm d}y\Big)^{\frac{1}{s}}\\
&&\quad=\sum_{j\in\mathbb{Z}:\, 2^j\approx |x|}\Big(\int_{|x|/2\leq |y-x|\leq 2|x|}|K_{t}^j*\phi_{j-l}(x-y)||f(y)|^s{\rm d}y\Big)^{\frac{1}{s}}\\
&&\quad\lesssim\Big(M_{\Omega}M(|f|^s)(x)\Big)^{1/s}.
\end{eqnarray*}
Another application of the H\"older inequality then yields
\begin{eqnarray}\sum_{j\in\mathbb{Z}}|F_{j,\,b}^{l}f(x,\,t)\big|^2&\lesssim&
\sum_{j\in\mathbb{Z}}\Big(\int_{|y|<R}|K_{t}^j*\phi_{j-l}(x-y)||f(y)|^s{\rm d}y\Big)^{2/s}\\
&&\qquad\times\Big(\int_{|y|<R}|K_{t}^j*\phi_{j-l}(x-y)|{\rm d}y\Big)^{2/s'}\nonumber\\
&\lesssim &2^{\frac{nl}{s'}}|x|^{-\frac{n}{s'}}R^{\frac{n}{s'}}\Big(M_{\Omega}M(|f|^s)(x)\Big)^{2/s}.\nonumber
\end{eqnarray}
This, in turn leads to our claim (i).

We turn our attention to claim (ii).  Write
\begin{eqnarray*}|F_{j,\,b}^{l}f(x,\,t)-F_{j,\,b}^{l}f(x+h,\,t)|\leq|b(x)-b(x+h)||F_j^lf(x,\,t)|+{\rm J}^{l}_jf(x,\,t),
\end{eqnarray*}
with$${\rm J}_{j}^{l}f(x,t)=\Big|\int_{\mathbb{R}^n}\big(K_{t}^j*\phi_{j-l}(x-y)-K_{t}^j*\phi_{j-l}(x+h-y)\big)\big(b(x+h)-b(y)\big)f(y){\rm d}y\Big|.$$
It follows from Lemma 3.1 that
\begin{eqnarray*}
\Big(\sum_{j>j_0}|{\rm J}_{j}^{l}f(x,t)|^2\Big)^{\frac{1}{2}}
&\lesssim&\sum_{j>j_0}\int_{\mathbb{R}^n}\big|K_{t}^j*\phi_{j-l}(x-y)-K_{t}^j*\phi_{j-l}(x+h-y)\big||f(y)|{\rm d}y\\
&\lesssim&2^{l(n+1)}|h|2^{-j_0}M_sf(x).
\end{eqnarray*}
Therefore,
\begin{eqnarray*}&&
\Big(\int^2_1
\sum_{j>j_0}|F_{j,\,b}^{l}f(x,\,t)-F_{j,\,b}^{l}f(x+h,\,t)|^2{\rm
d}t\Big)^{\frac{1}{2}}\\
&&\quad\lesssim|h|\widetilde{\mathcal{M}}_{\Omega}^{l,\,j_0}f(x)+2^{l(n+1)}2^{-j_0}|h|M_sf(x).\nonumber
\end{eqnarray*}

We now verify claim (iii). For each fixed $\sigma\in (0,\,1/2)$ and
$t\in [1,\,2]$, let
$$U_{t,\,\sigma}^j(z)=\frac{1}{2^j}\frac{|\Omega(z)|}{|z|^{n-1}}\chi_{\{2^j(t-\sigma)\leq
|z|\leq
2^jt\}}+\frac{1}{2^j}\frac{|\Omega(z)|}{|z|^{n-1}}\chi_{\{2^{j+1}t\leq
|z|\leq 2^{j+1}(t+\sigma)\}},$$ and
$$G_{l,\,t,\,\sigma}^jf(x)=\int_{\mathbb{R}^n}\big(U_{t,\,\sigma}^j*|\phi_{j-l}|\big)(x-y)|f(y)|{\rm
d}y.$$ Note that for $t\in [1,\,2]$,
$$\|U_{t,\,\sigma}^j*|\phi_{j-l}|\|_{L^1(\mathbb{R}^n)}\lesssim
\sigma, \,\,\,\sup_{|j|\leq N}\sup_{t\in
[1,\,2]}|G_{l,\,t,\,\sigma}^jf(x)|\lesssim MM_{\Omega}f(x).$$ Thus,
\begin{eqnarray}
\Big\|\sup_{|j|\leq N}\sup_{t\in
[1,\,2]}|G_{l,\,t,\,\sigma}^jf|\Big\|_{L^{\infty}(\mathbb{R}^n)}\lesssim
\sigma\|f\|_{L^{\infty}(\mathbb{R}^n)},
\end{eqnarray}
and
\begin{eqnarray}
\qquad\Big\|\sup_{|j|\leq N}\sup_{t\in
[1,\,2]}|G_{l,\,t,\sigma}^jf|\Big\|_{L^{p}(\mathbb{R}^n,w)}\lesssim\|MM_{\Omega}\|_{L^{p}(\mathbb{R}^n,w)}\lesssim
\|f\|_{L^{p}(\mathbb{R}^n,\,w)}.
\end{eqnarray}
Interpolating the estimates (3.8) and (3.9) shows that if $p_1\in (p,\,\infty)$,
\begin{eqnarray}
\Big\|\sup_{|j|\leq N}\sup_{t\in
[1,\,2]}|G_{l,\,t,\,\sigma}^jf|\Big\|_{L^{p_1}(\mathbb{R}^n,\,w)}\lesssim
\sigma^{1-p/p_1}\|f\|_{L^{p_1}(\mathbb{R}^n,\,w)}.
\end{eqnarray}
On the other hand, if $p_0\in (1,\,p)$, it then follows from the weighted estimae $M$ and $M_{\Omega}$ that
\begin{eqnarray}
\int_{\mathbb{R}^n}\int^2_1\sum_{|j|\leq
N}\big|G_{l,\,t,\,\sigma}^jf(x)\big|^{p_0}{\rm d}tw(x){\rm d}x \lesssim
N\|f\|_{L^{p_0}(\mathbb{R}^n,\,w)}^{p_0}.
\end{eqnarray}
Choosing $p_1\in (2,\,\infty)$ such that $1/p=1/2+(2-p_0)/(2p_1)$ in (3.10), we get from (3.10) and (3.11) that for $p\in (1,\,2)$,
\begin{eqnarray}
\Big\|\Big(\int^2_1\sum_{|j|\leq
N}\big|G_{l,\,t,\,\sigma}^jf(x)\big|^2{\rm
d}t\Big)^{\frac{1}{2}}\Big\|_{L^p(\mathbb{R}^n,\,w)} \lesssim
N\sigma^{\tau_1}\|f\|_{L^{p}(\mathbb{R}^n,\,w)}.
\end{eqnarray}
with $\tau_1\in (0,\,1)$ a constant. If $p\in [2,\,\infty)$, we obtain from the
Minkowski inequality and the Young inequality that
\begin{eqnarray} &&\Big\|\Big(\int^2_1\sum_{|j|\leq N}|G_{l,\,t,\,\sigma}^jf(x)|^2{\rm
d}t\Big)^{\frac{1}{2}}\Big\|_{L^p(\mathbb{R}^n,\,w)}^2\\
&&\quad\lesssim\Big\{\int_{\mathbb{R}^n}\Big(\int^2_1\Big(\sum_{|j|\leq
N}\int_{\mathbb{R}^n}\big(U_{l,t,\sigma}^j*|\phi_{j-l}|\big)(x-y)|f(y)|{\rm
d}y\Big)^2{\rm d}t\Big)^{\frac{p}{2}}w(x){\rm d}x\Big\}^{\frac{2}{p}}\nonumber\\
&&\quad\lesssim\int^2_1\Big\{\sum_{|j|\leq
N}\Big(\int_{\mathbb{R}^n}\Big(\int_{\mathbb{R}^n}\big(U_{l,\,t,\sigma}^j*|\phi_{j-l}|\big)(x-y)|f(y)|{\rm
d}y\Big)^pw(x){\rm d}x\Big)^{\frac{1}{p}}\Big\}^2{\rm d}t\nonumber\\
&&\quad\lesssim N^2\|f\|_{L^p(\mathbb{R}^n,\,w)}^2.\nonumber
\end{eqnarray}
Also, we have that
\begin{eqnarray} &&\Big\{\int_{\mathbb{R}^n}\Big(\int^2_1\sum_{|j|\leq
N}\Big(\int_{\mathbb{R}^n}\big(U_{l,\,t,\,\sigma}^j*|\phi_{j-l}|\big)(x-y)|f(y)|{\rm
d}y\Big)^2{\rm d}t\Big)^{\frac{p}{2}}{\rm d}x\Big\}^{\frac{2}{p}}\\
&&\quad\lesssim\int^2_1\Big\{\sum_{|j|\leq
N}\big\|\big(U_{l,\,t,\,\sigma}^j*|\phi_{j-l}|*|f|\big\|_{L^p(\mathbb{R}^n)}\Big\}^2{\rm d}t\nonumber\\
&&\quad\lesssim(2N\sigma)^2\|f\|_{L^p(\mathbb{R}^n)}^2,\,\,p\in [2,\,\infty).\nonumber
\end{eqnarray}
The inequalities (3.13) and (3.14), via interpolation with changes of measures, give us that for $p\in [2,\,\infty)$,
\begin{eqnarray}\qquad \Big\|\Big(\int^2_1\sum_{|j|\leq N}|G_{l,\,t,\,\sigma}^jf(x)|^2{\rm
d}t\Big)^{\frac{1}{2}}\Big\|_{L^p(\mathbb{R}^n,\,w)}\lesssim N\sigma^{\tau_2}\|f\|_{L^p(\mathbb{R}^n,\,w)},
\end{eqnarray}
with $\tau_2\in (0,\,1)$ a constant. Since
$$\sup_{|s|\leq \sigma}|F_{j,\,b}^{l}f(x,\,t)-F_{j,\,b}^{l}f(x,\,t+s)|\leq
G_{l,\,t,\,\sigma}^jf(x), $$ our claim (iii) now follow from (3.12)
and (3.15) immediately if we choose $\sigma=\epsilon/(2N)$.

It remains to prove (iv). Let $D>0$ and $N\in\mathbb{N}$ such that
$2^{N-2}>D$. Then for $j> N$ and $x\in\mathbb{R}^n$ with $|x|\leq
D$,
\begin{eqnarray*}\int_{\mathbb{R}^n}\big|K_{t}^j*\phi_{j-l}(x-y)f(y)\big|{\rm d}y&\leq&\int_{\mathbb{R}^n}
\big|K_{t}^j*\phi_{j-l}(x-y)f(y)\big|\chi_{\{|y|\leq 2^{j+3}\}}{\rm d}y\\
&\lesssim&\int_{|y|\leq 2^{j+3}}|f(y)|{\rm d}y\|K_{t}^j\|_{L^1(\mathbb{R}^n)}\|\phi_{j-l}\|_{L^\infty(\mathbb{R}^n)}\nonumber\\
&\lesssim&2^{nl}2^{-nj/p}\|f\|_{L^p(\mathbb{R}^n)}.\nonumber
\end{eqnarray*}Therefore,
\begin{eqnarray}\qquad\Big\|\Big(\int^2_1\sum_{j>N}\big|F_{j,\,b}^{l}f(\cdot,\,t)|^2{\rm d}t\Big)^{\frac{1}{2}}\chi_{B(0,\,D)}\Big\|_{L^p(\mathbb{R}^n)}
\lesssim 2^{nl}\big(\frac{D}{2^N}\big)^{n/p}\|f\|_{L^p(\mathbb{R}^n)}.
\end{eqnarray}
It is obvious that
\begin{eqnarray}\Big\|\Big(\int^2_1\sum_{j>N}\big|F_{j,\,b}^{l}f(\cdot,\,t)|^2{\rm d}t\Big)^{\frac{1}{2}}\chi_{B(0,\,D)}\Big\|_{L^p(\mathbb{R}^n,\,w)}
\lesssim l\|f\|_{L^p(\mathbb{R}^n,\,w)}.
\end{eqnarray}
Interpolating the inequalities (3.16) and (3.17) yields
$$\Big\|\Big(\int^2_1\sum_{j>N}\big|F_{j,\,b}^{l}f(\cdot,\,t)|^2{\rm d}t\Big)^{\frac{1}{2}}\chi_{B(0,\,D)}\Big\|_{L^p(\mathbb{R}^n,\,w)}
\lesssim 2^{\tau_3 nl}\big(\frac{D}{2^N}\big)^{\frac{\tau_3 n}{p}}\|f\|_{L^p(\mathbb{R}^n,\,w)}.
$$
with $\tau_3\in (0,\,1)$ a constant depending only on $w$. The claim (iv) now follows directly.

We can now conclude the proof of Theorem \ref{t1.2}. Let $p\in
(1,\,\infty)$. Note  that
$$\widetilde{\mathcal{M}}_{\Omega,\,b}^{l,\,j_0}f(x)\leq \widetilde{\mathcal{M}}_{\Omega,\,b}^{l}f(x),$$
and so $\widetilde{\mathcal{M}}_{\Omega,\,b}^{l,\,j_0}$ is bounded on $L^p(\mathbb{R}^n,\,w)$. Our claims (i)-(iv), via Lemma \ref{l3.4},
prove that for $b\in
C^{\infty}_0(\mathbb{R}^n)$, $l\in\mathbb{N}$ and $j_0\in\mathbb{Z}_-$, the operator
$\mathcal{F}^{l,\,}_{j_0}$ defined by
\begin{eqnarray}\mathcal{F}^{l}_{j_0}:\, f(x)\rightarrow \{\dots,\,0,\,\dots,\,F_{j_0,\,b}^{l}f(x,\,t),\,F_{j_0+1,\,b}^lf(x,\,t),\,\dots\}\end{eqnarray}
is  compact from $L^p(\mathbb{R}^n,w)$ to
$L^p(L^2([1,\,2]),l^2;\,\mathbb{R}^n,w)$. Thus,
$\widetilde{\mathcal{M}}_{\Omega,\,b}^{l,\,j_0}$ is completely
continuous on $L^p(\mathbb{R}^n,\,w)$. This, via Lemma \ref{l3.2} and Theorem
\ref{t2.2}, shows  that for  $b\in C^{\infty}_0(\mathbb{R}^n)$,
$\widetilde{\mathcal{M}}_{\Omega,\,b}$ is completely continuous on
$L^p(\mathbb{R}^n,\,w)$. Note that
$$\big|\mathcal{M}_{\Omega,\,b}f_k(x)-\mathcal{M}_{\Omega,\,b}f(x)\big|\lesssim \mathcal{M}_{\Omega,\,b}(f_k-f)(x)\lesssim \widetilde{\mathcal{M}}_{\Omega,\,b}(f_k-f)(x).$$
Thus,  for $b\in C^{\infty}_0(\mathbb{R}^n)$,
$\mathcal{M}_{\Omega,\,b}$ is completely continuous on
$L^p(\mathbb{R}^n,\,w)$. Recalling that  $\mathcal{M}_{\Omega,\,b}$ is bounded on
$L^p(\mathbb{R}^n,\,w)$ with bound $C\|b\|_{{\rm BMO}(\mathbb{R}^n)}$,
we obtain that for $b\in {\rm
CMO}(\mathbb{R}^n)$, $\mathcal{M}_{\Omega,\,b}$ is completely
continuous on $L^p(\mathbb{R}^n,\,w)$.\qed

\section{Proof of Theorem \ref{t1.3}}
The following lemma will be useful in the proof of Theorem \ref{t1.3}, and is of independent interest.
\begin{lemma}\label{l4.1}
Let $u\in (1,\,\infty)$, $m\in \mathbb{N}\cup\{0\}$, $S$ be a sublinear operator  which
satisfies that
$$|Sf(x)|\leq \int_{\mathbb{R}^n}|b(x)-b(y)|^m|W(x-y)f(y)|{\rm d}y,$$
with $b\in {\rm BMO}(\mathbb{R}^n)$, and
\begin{eqnarray}\sup_{R>0}R^{n/u}\Big(\int_{R\leq |x|\leq 2R}|W(x)|^{u'}{\rm d}x\Big)^{1/u'}\lesssim 1.\end{eqnarray}
\begin{itemize}
\item[\rm (a)] Let $p\in (u,\,\infty)$, $\lambda\in (0,\,1)$ and $w\in
A_{p/u}(\mathbb{R}^n)$. If $S$ is bounded on $L^p(\mathbb{R}^n,\,w)$
with bound $D\|b\|_{{\rm BMO}(\mathbb{R}^n)}^m$, then for some $\varepsilon\in
(0,\,1)$,$$\big\|Sf\big\|_{L^{p,\,\lambda}(\mathbb{R}^n,\,w)}\lesssim
(D+D^{\varepsilon})\|b\|_{{\rm BMO}(\mathbb{R}^n)}^m\|f\|_{L^{p,\,\lambda}(\mathbb{R}^n,\,w)}.$$
\item[\rm (b)] Let $p\in (1,\,u)$, $w^{r}\in A_1(\mathbb{R}^n)$ for some $r\in (u,\,\infty)$ and $\lambda\in
(0,\,1-r'/u')$. If $S$ is bounded on $L^p(\mathbb{R}^n,\,w)$ with bound $D$,
then for some $\varepsilon\in
(0,\,1)$,$$\big\|Sf\big\|_{L^{p,\,\lambda}(\mathbb{R}^n)}\lesssim(D+
D^{\varepsilon})\|b\|_{{\rm BMO}(\mathbb{R}^n)}^m\|f\|_{L^{p,\,\lambda}(\mathbb{R}^n)}.$$
\end{itemize}
\end{lemma}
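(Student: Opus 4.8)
The plan is to fix an arbitrary ball $B=B(x_0,r)$, split $f=f_1+f_2$ with $f_1=f\chi_{2B}$ and $f_2=f\chi_{\mathbb{R}^n\setminus 2B}$, and estimate the local and tail contributions to the Morrey average over $B$ separately, using sublinearity $|Sf|\le|Sf_1|+|Sf_2|$. In part (a) the local term is immediate from the hypothesis: $\int_B|Sf_1|^pw\le(D\|b\|_{{\rm BMO}(\mathbb{R}^n)}^m)^p\int_{2B}|f|^pw$, and since $w\in A_{p/u}(\mathbb{R}^n)\subset A_\infty(\mathbb{R}^n)$ is doubling, $w(2B)\lesssim w(B)$; dividing by $w(B)^\lambda$ gives a local contribution bounded by $D\|b\|_{{\rm BMO}(\mathbb{R}^n)}^m\|f\|_{L^{p,\lambda}(\mathbb{R}^n,w)}$. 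The whole difficulty is therefore concentrated in the tail, where I must produce the factor $D^{\varepsilon}$ rather than a universal constant.

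For the tail I would decompose $\mathbb{R}^n\setminus 2B=\bigcup_{k\ge 1}A_k$ with $A_k=\{y:2^kr\le|y-x_0|<2^{k+1}r\}$, set $g_k=f\chi_{A_k}$ and $B_k=B(x_0,2^{k+1}r)$, and bound $\|Sg_k\|_{L^p(B,w)}$ in two independent ways. First, directly from the assumed $L^p(\mathbb{R}^n,w)$ boundedness,
\[
\|Sg_k\|_{L^p(B,w)}\le D\|b\|_{{\rm BMO}(\mathbb{R}^n)}^m\Big(\int_{A_k}|f|^pw\Big)^{1/p}\lesssim D\|b\|_{{\rm BMO}(\mathbb{R}^n)}^m\,w(B_k)^{\lambda/p}\|f\|_{L^{p,\lambda}(\mathbb{R}^n,w)}.
\]
Second, from the pointwise domination, Hölder's inequality with exponents $u',u$, the size condition (4.1) (which yields $(\int_{A_k}|W(x-y)|^{u'}{\rm d}y)^{1/u'}\lesssim(2^kr)^{-n/u}$ for $x\in B$), the $A_{p/u}$-Hölder estimate $(\int_{A_k}|f|^u)^{1/u}\lesssim(2^kr)^{n/u}w(B_k)^{(\lambda-1)/p}\|f\|_{L^{p,\lambda}(\mathbb{R}^n,w)}$, and the BMO splitting $|b(x)-b(y)|^m\lesssim(|b(x)-b_B|+k\|b\|_{{\rm BMO}(\mathbb{R}^n)})^m+|b(y)-b_{B_k}|^m\cdots$ together with a weighted John--Nirenberg inequality over $B$, I obtain
\[
\|Sg_k\|_{L^p(B,w)}\lesssim k^{m}\|b\|_{{\rm BMO}(\mathbb{R}^n)}^m\,w(B)^{1/p}w(B_k)^{(\lambda-1)/p}\|f\|_{L^{p,\lambda}(\mathbb{R}^n,w)}.
\]
Writing $\theta_k=w(B_k)/w(B)$, the two bounds carry the factors $\theta_k^{\lambda/p}$ and $\theta_k^{(\lambda-1)/p}$ respectively. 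Interpolating them (geometric mean) with parameter $\eta\in(0,1-\lambda)$ replaces these by $\theta_k^{(\lambda-1+\eta)/p}$, whose exponent is negative; since $w\in A_\infty(\mathbb{R}^n)$ gives the reverse-doubling lower bound $\theta_k\gtrsim 2^{kn\delta}$, this factor decays geometrically and dominates the polynomial $k^{m(1-\eta)}$. Summing the resulting convergent series gives $\|Sf_2\|_{L^p(B,w)}\lesssim D^{\eta}\|b\|_{{\rm BMO}(\mathbb{R}^n)}^m\,w(B)^{\lambda/p}\|f\|_{L^{p,\lambda}(\mathbb{R}^n,w)}$, i.e. a tail contribution $D^{\varepsilon}\|b\|_{{\rm BMO}(\mathbb{R}^n)}^m$ with $\varepsilon=\eta$, and adding it to the local part proves (a).

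Part (b) I would handle by the same local/tail scheme, the difference being that only the weighted bound is available while an \emph{unweighted} Morrey conclusion is sought. Accordingly, every passage between $\int_B(\cdot)\,{\rm d}x$ and $\int_B(\cdot)\,w$ is carried out by Hölder's inequality built on $w^{r}\in A_1(\mathbb{R}^n)$, and the hypothesis $\lambda\in(0,1-r'/u')$ is exactly what renders the relevant Hölder exponents admissible and keeps the annular series summable; the same two-bound interpolation then extracts the factor $D^{\varepsilon}$. The main obstacle throughout is precisely this extraction of $D^{\varepsilon}$ in place of an absolute constant on the tail — indispensable for the approximation argument behind Theorem \ref{t1.3}, where $D\to 0$ — which works only because the interpolation parameter can be chosen below $1-\lambda$ so that the $A_\infty$ growth of $w(B_k)$ still beats the interpolated exponent; bookkeeping of the BMO factors $k^{m}$ across dyadic scales (harmless thanks to the geometric decay) and, in part (b), the weighted/unweighted conversions governed by the range of $\lambda$ are the remaining technical points.
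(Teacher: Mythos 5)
Your part (a) is correct and is essentially the paper's own proof: the same decomposition of $f$ into $f\chi_{2B}$ plus dyadic annular pieces, the same two estimates for each annulus (the direct $L^p(\mathbb{R}^n,w)$ bound carrying the factor $D$, and the kernel-size/H\"older/John--Nirenberg bound carrying $w(B)^{1/p}w(B_k)^{(\lambda-1)/p}$), the same geometric-mean interpolation between them, and the same use of the $A_\infty$ comparison $w(B)/w(2^kB)\lesssim 2^{-kn\tau}$ to sum the series; your parameter $\eta\in(0,1-\lambda)$ is exactly the paper's $1-\sigma$ with $\sigma\in(\lambda,1)$ in (4.2)--(4.4).

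Part (b), however, has a genuine gap. You assert it follows by ``the same local/tail scheme,'' with the only new difficulty being weighted/unweighted conversions via $w^r\in A_1$. But the scheme of (a) breaks down structurally when $p\in(1,u)$: your second (kernel) bound rests on H\"older with exponents $u,u'$ followed by the passage $\big(\int_{A_k}|f|^u\big)^{1/u}\lesssim (2^kr)^{n/u}w(B_k)^{(\lambda-1)/p}\|f\|_{L^{p,\lambda}(\mathbb{R}^n,w)}$, which uses $w\in A_{p/u}$ and hence requires $p>u$. For $p<u$ H\"older runs in the wrong direction, and no estimate of this type at exponent $p$ is available; so the ``two-bound interpolation'' you invoke cannot even be set up. The paper's proof of (b) supplies precisely the missing machinery: first an $L^1(B,w)$ estimate for $S_kf$ (inequality (4.5)), obtained by integrating $\int_B|W(x-y)||b(x)-m_B(b)|w(x)\,{\rm d}x$ directly via H\"older with exponent $u\vartheta$ and $w^{u\vartheta}\in A_1$ --- this is where $r>u$ is really needed, and it produces an extra \emph{growth} factor $2^{kn/u'}$ absent in part (a); second, an $L^q(B,w)$ estimate for some $q>u$ (inequality (4.6)), where the (a)-type argument does apply; these two are then interpolated to reach the exponent $p\in(1,u)$, and only after that does the geometric mean with the $D$-bound enter. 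Finally, the decay $w(B)/w(2^kB)\lesssim 2^{-kn/r'}$ coming from $w^r\in A_1$ must beat the growth $2^{kn/u'}$ after the $\lambda$-loss, and this is exactly where the hypothesis $\lambda<1-r'/u'$ is used --- not, as you suggest, to make ``H\"older exponents admissible'' in weighted/unweighted conversions. Indeed, the paper's argument for (b) works entirely within the weighted scale (the unweighted norms displayed in the statement of (b) are best read as a misprint for $L^{p,\lambda}(\mathbb{R}^n,w)$, which is what the proof establishes and what the proof of Theorem \ref{t1.3} uses), so the conversion issue you organize your sketch around is not the actual obstruction, and the actual obstruction is left unresolved in your proposal.
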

\begin{proof} For simplicity, we only consider the case of $m=1$ and $\|b\|_{{\rm BMO}(\mathbb{R}^n)}=1$. For fixed
ball $B$ and $f\in L^{p,\,\lambda}(\mathbb{R}^n,\,w)$, decompose $f$
as
$$f(y)=f(y)\chi_{2B}(y)+\sum_{k=1}^{\infty}f(y)\chi_{2^{k+1}B\backslash
2^kB}(y)=\sum_{k=0}^{\infty}f_k(y).$$ It is obvious  that
\begin{eqnarray*}\int_{B}|Sf_0(y)|^pw(y){\rm
d}y\lesssim D^p\int_{2B}|f(y)|^pw(y)\,{\rm d}y\lesssim
D^p\|f\|_{L^{p,\lambda}(\mathbb{R}^n,w)}^p\{w(B)\}^{\lambda}.\end{eqnarray*}
Let $\theta\in (1,\,p/u)$ such that $w\in A_{p/(\theta u)}(\mathbb{R}^n)$.
For each $k\in\mathbb{N}$, let
$S_kf(x)=S\big(f\chi_{2^{k+1}B\backslash 2^kB}\big)(x)$. Then $S_k$
is also sublinear. We have by the H\"older inequality that for each $x\in B$,
\begin{eqnarray*}
|S_kf(x)|&\lesssim&|b(x)-m_B(b)|\|f_k\|_{L^u(\mathbb{R}^n)}\Big(\int_{2^kB}|W(x-y)|^{u'}{\rm
d}y\Big)^{1/u'}\\
&&+\big\|\big(b-m_B(b)\big)f_k\big\|_{L^u(\mathbb{R}^n)}\Big(\int_{2^kB}|W(x-y)|^{u'}{\rm
d}y\Big)^{1/u'}\\
&\lesssim&|b(x)-m_B(b)|\|f_k\|_{L^p(\mathbb{R}^n,w)}\Big(\int_{2^kB}w^{-\frac{1}{p/u-1}}(y){\rm d}y\Big)^{\frac{1}{u(p/u)'}}|2^kB|^{-\frac{1}{u}}\\
&&+\Big(\int_{2^{k+1}B}|b(y)-m_B(b)|^{p\theta '}{\rm d}y\Big)^{1/(p\theta')}\|f_k\|_{L^p(\mathbb{R}^n,w)}\\
&&\quad\times\Big(\int_{2^kB}w^{-\frac{1}{p/(\theta u)-1}}(y){\rm d}y\Big)^{\frac{1}{u\big(p/(\theta u)\big)'}}|2^kB|^{-\frac{1}{u}},
\end{eqnarray*}
here, $m_B(b)$ denotes the mean value of $b$ on $B$. It follows from the John-Nirenberg inequality that
$$ \Big(\int_{2^{k+1}B}|b(y)-m_B(b)|^{p\theta '}{\rm d}y\Big)^{\frac{1}{p\theta '}}\lesssim k|2^kB|^{\frac{1}{p\theta'}}. $$Therefore, for $q\in (1,\,\infty)$ and  $k\in\mathbb{N}$,  we have
\begin{eqnarray}
\big\|S_kf\big\|_{L^q(B,\,w)}\lesssim k\{w(B)\}^{\frac{1}{q}-\frac{1}{p}}\Big(\frac{w(B)}{w(2^kB)}\Big)^{1/p}\|f_k\|_{L^p(\mathbb{R}^n,\,w)}.
\end{eqnarray}
On the other hand, we deduce from the $L^p(\mathbb{R}^n,\,w)$
boundedness of $S$ that
\begin{eqnarray}
\int_{B}|S_kf(y)|^pw(y)\,{\rm d}y&\lesssim&
D^p\int_{2^kB}|f(x)|^pw(x)\,{\rm d}x
\end{eqnarray}
We then get from (4.2) (with $q=p$) and (4.3) that for $\sigma\in (0,\,1)$,
\begin{eqnarray}
\int_{B}|S_kf(y)|^pw(y)\,{\rm d}y&\lesssim&k^p
D^{p(1-\sigma)}\big(\frac{w(B)}{w(2^kB)}\big)^{\sigma}\int_{2^kB}|f(x)|^pw(x)\,{\rm
d}x
\end{eqnarray}
Recall that $w\in A_{p/u}(\mathbb{R}^n)$. Thus, there exists a constant $\tau\in
(0,\,1)$,
$$\frac{w(B)}{w(2^kB)}\lesssim
\big(\frac{|B|}{|2^kB|}\big)^{\tau},$$see \cite{gra2}.  For fixed $\lambda\in (0,\,1)$, we choose $\sigma$ sufficiently close to $1$ such that $0<\lambda<\sigma$.
It then  follows from (4.4)
that
\begin{eqnarray*}
\sum_{k=0}^{\infty}\Big(\int_{B}|Sf_k(y)|^p{\rm d}y\Big)^{\frac{1}{p}}
&\lesssim
&D^{1-\sigma}\{w(B)\}^{\frac{\lambda}{p}}\sum_{k=0}^{\infty}(k+1)2^{-kn\tau(\sigma-\lambda)/p}
\|f\|_{L^{p,\lambda}(\mathbb{R}^n,w)}\\
&\lesssim&D^{1-\sigma}\{w(B)\}^{\lambda/p}\|f\|_{L^{p,\,\lambda}(\mathbb{R}^n,\,w)}.
\end{eqnarray*}
THis leads to the conclusion (a).

Now we turn our attention to conclusion (b).
From  (4.1), it is obvious  that for $y\in 2^{k+1}B\backslash 2^kB$,
$$\int_{B}|W(x-y)||b(x)-m_B(b)|w(x){\rm d}x\lesssim|2^{k}B|^{-1/u}|B|^{\frac{1}{u\vartheta'}}\Big(\int_{B}w^{u\vartheta}(x){\rm d}x\Big)^{\frac{1}{u\vartheta}}, $$
with $\vartheta\in (1,\,\infty)$ small enough such that $w^{u\vartheta}\in A_1(\mathbb{R}^n)$. This, in turn implies that
\begin{eqnarray*}
&&\int_{B}\int_{2^{k+1}B\backslash 2^kB}|W(x-y)h(y)|{\rm d}y|b(x)-m_B(b)|w(x){\rm d}x\\
&&\quad\lesssim 2^{kn/u'}\frac{w(B)}{w(2^kB)}
\int_{2^{k}B}h(y)w(y)\,{\rm d}y.
\end{eqnarray*}
Therefore, for $s\in (1,\,\infty)$,
\begin{eqnarray}\quad\int_{B}|S_kf(x)|w(x){\rm
d}x&\lesssim&2^{kn/u'}\frac{w(B)}{w(2^kB)}\int_{2^kB}|f(x)|w(x)\,{\rm d}x\\
&&+2^{kn/u'}\frac{w(B)}{w(2^kB)}\int_{2^kB}|f(x)||b(x)-m_B(b)|w(x)\,{\rm d}x\nonumber\\.
&\lesssim&k2^{\frac{kn}{u'}}\frac{w(B)}{w(2^{k+1}B)}\Big(\int_{2^kB}|f(x)|^sw(x){\rm d}x\Big)^{\frac{1}{s}}\big\{w(2^kB)\big\}^{\frac{1}{s'}}.\nonumber\end{eqnarray}
Also, we get by (4.2) that for $q\in (u,\,\infty)$ and $\theta\in (0,\,1)$ with $\theta q\in (u,\,\infty)$,
\begin{eqnarray}
\big\|S_kf\big\|_{L^q(B,\,w)}\lesssim k\{w(B)\}^{\frac{1}{q}-\frac{1}{\theta q}}\Big(\frac{w(B)}{w(2^kB)}\Big)^{\frac{1}{\theta q}}\|f\|_{L^{\theta q}(2^{k+1}B,\,w)}.
\end{eqnarray}
For $p\in (1,\,\infty)$, we choose $q\in (u,\,\infty)$ and $\theta\in (0,\,1)$,  $s\in (1,\,\infty)$ which is close to $1$ sufficiently such that $1/p=t+(1-t)/q$ and $1/p=t/s+(1-t)/(\theta q)$, with $t\in (0,\,1/p)$. By interpolating, we obtain from  the inequalities (4.5) and
(4.6) that
\begin{eqnarray*}\|S_kf\|_{L^p(\mathbb{R}^n,\,w)}\lesssim k2^{\frac{kn}{pu'}}\Big(\frac{w(B)}{w(2^kB)}\Big)^{1/p}\|f\|_{L^p(2^kB,\,w)}.
\end{eqnarray*}
The fact that $w^{r}\in A_1(\mathbb{R}^n)$ tells us that
$$\frac{w(B)}{w(2^kB)}\lesssim 2^{-kn(r-1)/r},$$ see \cite[p. 306]{gra2}.
This, together with the fact that $S$ is bounded on $L^p(\mathbb{R}^n,\,w)$ with bound
$D$, gives us that for any $\omega\in (0,\,1)$,
\begin{eqnarray*}\Big(\int_{B}|S_kf(x)|^pw(x){\rm
d}x\Big)^{1/p}&\lesssim&
D^{1-\omega}k2^{\frac{\omega kn}{pu'}}\Big(\frac{w(B)}{w(2^kB)}\Big)^{\omega/p}\|f\|_{L^p(2^kB,\,w)}\\
&\lesssim&\{w(B)\}^{\lambda/p}
D^{1-\omega}k2^{\frac{kn}{p}\big(\frac{\omega}{u'}-\frac{\omega-\lambda}{r'}\big)}\|f\|_{L^{p,\lambda}(\mathbb{R}^n,w)}.
\end{eqnarray*}
For fixed $\lambda\in (0,\,1-r'/u')$, we choose $\omega\in (\lambda,\,1)$
sufficiently close to $1$ such that $\omega/u'-(\omega-\lambda)/r'<0$.
Summing over the last inequality yields conclusion (b).
\end{proof}

Let $p,\,r\in[1,\,\infty)$, $\lambda\in (0,\,1)$, $q\in [1,\,\infty]$ and $w$ be a weight.
Define the space $L^{p,\,\lambda}(L^q([1,\,2]),\,l^r;\,\mathbb{R}^n,\,w)$  by
$$L^{p,\,\lambda}(L^q([1,\,2]),\,l^r;\,\mathbb{R}^n,\,w)=\big\{\vec{f}=\{f_k\}_{k\in \mathbb{Z}}:\, \|\vec{f}\|_
{L^{p,\,\lambda}(L^q([1,\,2]),\,l^r;\,\mathbb{R}^n,\,w)}<\infty\big\},$$ with
$$\|\vec{f}\|_{L^{p,\,\lambda}(L^q([1,\,2]),\,l^r;\,\mathbb{R}^n,\,w)}=\Big\|\Big(\int^2_1\Big(\sum_{k\in
\mathbb{Z}}|f_k(x,\,t)|^r\Big)^{\frac{q}{r}}{\rm
d}t\Big)^{\frac{1}{q}}\Big\|_{L^{p,\,\lambda}(\mathbb{R}^n,\,w)}.$$ With usual
addition and scalar multiplication,
$L^{p,\,\lambda}(L^q([1,\,2]),\,l^{r};\,\mathbb{R}^n,\,w)$ is a Banach space.

\begin{lemma}\label{l4.2}
Let $p\in (1,\,\infty)$, $\lambda\in (0,\,1)$ and $w\in
A_p(\mathbb{R}^n)$, $\mathcal{G}$ be a subset in $
L^{p,\,\lambda}(L^2([1,\,2]),\,l^{2};\,\mathbb{R}^n,\,w)$. Suppose that
$\mathcal{G}$ satisfies the following five conditions:
\begin{itemize}
\item[\rm (a)] $\mathcal{G}$ is a bounded set in $L^{p,\,\lambda}(L^2([1,\,2]),\,l^{2};\,\mathbb{R}^n,\,w)$;
\item[\rm (b)] for each fixed $\epsilon>0$, there exists a constant $A>0$, such that for all $\{f_k\}_{k\in\mathbb{Z}}\in
\mathcal{G}$,
$$\Big\|\Big(\int^2_1\sum_{k\in\mathbb{Z}}|f_k(\cdot,\,t)|^2{\rm d}t\Big)^{\frac{1}{2}}\chi_{\{|\cdot|>A\}}(\cdot)\Big\|_{
L^{p,\,\lambda}(\mathbb{R}^n,\,w)}<\epsilon;$$

\item[\rm (c)] for each fixed $\epsilon>0$ and $N\in\mathbb{N}$, there exists a constant $\varrho>0$, such that for all
$\vec{f}=\{f_k\}_{k\in\mathbb{Z}}\in \mathcal{G}$,
$$\Big\|\sup_{|h|\leq \varrho}\Big(\int^2_1
\sum_{|k|\leq N}|f_k(\cdot,\,t)-f_k(\cdot+h,\,t)|^2{\rm
d}t\Big)^{\frac{1}{2}}\Big\|_{L^{p,\,\lambda}(\mathbb{R}^n,\,w)}< \epsilon;$$
\item[\rm (d)]for each fixed $\epsilon>0$ and $N\in\mathbb{N}$, there exists a constant
$\sigma\in (0,\,1/2)$ such that for all
$\vec{f}=\{f_k\}_{k\in\mathbb{Z}}\in \mathcal{G}$,
$$\Big\|\sup_{|s|\leq \sigma}\Big(\int^2_1
\sum_{|k|\leq N}|f_k(\cdot,\,t+s)-f_k(\cdot,\,t)|^2{\rm
d}t\Big)^{\frac{1}{2}}\Big\|_{L^{p,\lambda}(\mathbb{R}^n,\,w)}< \epsilon,$$
\item[\rm (e)] for each fixed $D>0$ and $\epsilon>0$, there exists $N\in\mathbb{N}$
such that for all $\{f_k\}_{k\in\mathbb{Z}}\in \mathcal {G}$,
$$\Big\|\Big(\int^2_1\sum_{|k|>N}|f_k(\cdot,\,t)|^2{\rm d}t\Big)^{\frac{1}{2}}\chi_{B(0,\,D)}\Big\|_{L^{p,\,\lambda}(\mathbb{R}^n,\,w)}<\epsilon.$$
\end{itemize}
Then $\mathcal{G}$ is  strongly pre-compact in
$L^{p,\,\lambda}(L^2([1,\,2]),\,l^{2};\,\mathbb{R}^n,\,w)$.
\end{lemma}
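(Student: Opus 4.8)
The plan is to follow the scheme used for Lemma \ref{l3.4}, replacing the norm $\|\cdot\|_{L^p(\mathbb{R}^n,\,w)}$ by $\|\cdot\|_{L^{p,\,\lambda}(\mathbb{R}^n,\,w)}$ throughout, and to reduce the assertion to the abstract pre-compactness criterion (Lemma 6 in \cite{ccp}) in the Banach space $L^{p,\,\lambda}(L^2([1,\,2]),\,l^2;\,\mathbb{R}^n,\,w)$. Thus, for each fixed $\epsilon>0$ I would construct exactly the same averaging--truncation map $\Phi_{\epsilon}$ as in the proof of Lemma \ref{l3.4}: choose $A$ from (b), $N$ from (e) (so that the tail for $|k|>N$ is controlled on $B(0,\,2A)$), $\varrho$ from (c) and $\sigma$ from (d); cover $\overline{B(0,\,A)}\subset\overline{\cup_{i=1}^JQ_i}\subset B(0,\,2A)$ by non-overlapping copies $Q_i$ of a cube with $2Q\subset B(0,\,\varrho)$, partition $[1,\,2]$ into non-overlapping intervals $I_1,\,\dots,\,I_L$ of length at most $\sigma$, and set $\Phi_{\epsilon}(\vec{f})$ to be the sequence whose $k$-th entry for $|k|\leq N$ equals $\sum_{i,j}m_{Q_i\times I_j}(f_k)\chi_{Q_i\times I_j}$ and which vanishes for $|k|>N$. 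It then suffices to establish the two facts that drive the argument: that $\Phi_{\epsilon}(\mathcal{G})$ is strongly pre-compact, and that $\|\vec{f}-\Phi_{\epsilon}(\vec{f})\|$ is uniformly small on $\mathcal{G}$.

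For the first fact, the range of $\Phi_{\epsilon}$ is finite-dimensional (it is spanned by the finitely many functions $\chi_{Q_i\times I_j}$ placed in the $k$-th slot, $|k|\leq N$), so it is enough to bound the coefficients $m_{Q_i\times I_j}(f_k)$ uniformly over $\mathcal{G}$ by $\|\vec{f}\|_{L^{p,\,\lambda}(L^2([1,\,2]),\,l^2;\,\mathbb{R}^n,\,w)}$. Writing $G(y)=\big(\int_1^2\sum_k|f_k(y,\,t)|^2{\rm d}t\big)^{1/2}$, applying the Cauchy--Schwarz inequality in $t$ and then the H\"older inequality with respect to $w$, I would obtain
$$|m_{Q_i\times I_j}(f_k)|\lesssim \frac{1}{|Q_i|\,|I_j|^{1/2}}\Big(\int_{Q_i}G(y)^pw(y){\rm d}y\Big)^{1/p}\Big(\int_{Q_i}w^{-1/(p-1)}(y){\rm d}y\Big)^{1/p'}.$$
Since $w\in A_p(\mathbb{R}^n)$ the factor $\int_{Q_i}w^{-1/(p-1)}$ is finite, and the doubling property of $w$ gives $\int_{Q_i}G^pw\lesssim \{w(Q_i)\}^{\lambda}\|\vec{f}\|^p_{L^{p,\,\lambda}(L^2,\,l^2;\,w)}$. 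For the fixed grid these bounds are uniform in $\vec{f}\in\mathcal{G}$, so $\Phi_{\epsilon}(\mathcal{G})$ is a bounded subset of a finite-dimensional subspace of $L^{p,\,\lambda}(L^2([1,\,2]),\,l^2;\,\mathbb{R}^n,\,w)$, hence strongly pre-compact.

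For the second fact, I decompose $\vec{f}-\Phi_{\epsilon}(\vec{f})$ precisely as in Lemma \ref{l3.4}: into the part with $|k|>N$ (handled by (e) on $B(0,\,2A)$ and by (b) on $\{|x|>A\}$) and the part $\vec{f}\chi_{\mathcal{D}}-\Phi_{\epsilon}(\vec{f})$ with $\mathcal{D}=\cup_iQ_i$ and $|k|\leq N$. The chain of estimates carried out there is purely pointwise, so it transfers verbatim: for every $x$,
$$\Big(\int_1^2\sum_{|k|\leq N}\big|f_k(x,\,t)\chi_{\mathcal{D}}(x)-\sum_{i,j}m_{Q_i\times I_j}(f_k)\chi_{Q_i\times I_j}(x,\,t)\big|^2{\rm d}t\Big)^{1/2}\lesssim \mathcal{O}_h(x)+\mathcal{O}_s(x),$$
where $\mathcal{O}_h$ and $\mathcal{O}_s$ are the spatial and temporal oscillation suprema appearing in (c) and (d). Because $\|\cdot\|_{L^{p,\,\lambda}(\mathbb{R}^n,\,w)}$ is monotone and subadditive, applying it to the last inequality and invoking (c), (d) bounds this contribution by $\lesssim\epsilon$, while (b), (e) bound the remaining pieces by $\lesssim\epsilon$. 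Hence $\|\vec{f}-\Phi_{\epsilon}(\vec{f})\|<C\epsilon$ uniformly on $\mathcal{G}$, which by the triangle inequality yields the implication $\|\Phi_{\epsilon}(\vec{f})-\Phi_{\epsilon}(\vec{g})\|<\delta\Rightarrow\|\vec{f}-\vec{g}\|<C'\epsilon$; Lemma 6 in \cite{ccp} then shows that $\mathcal{G}$ is strongly pre-compact.

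The main obstacle is the passage to the Morrey norm in these two estimates: unlike the $L^p$ case one cannot simply integrate over the fixed region $\mathcal{D}$, so averages and oscillations must be controlled uniformly over all balls. This is exactly where the monotonicity and subadditivity of $\|\cdot\|_{L^{p,\,\lambda}(\mathbb{R}^n,\,w)}$, together with the local integrability of $w^{-1/(p-1)}$ guaranteed by $w\in A_p(\mathbb{R}^n)$, are used; once these are in place the rest of the argument is a routine transcription of the proof of Lemma \ref{l3.4}.
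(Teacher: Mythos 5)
Your proposal is correct and follows the same overall scheme as the paper's proof: the same averaging--truncation map $\Phi_{\epsilon}$ built from the grid $\{Q_i\times I_j\}$, the same decomposition of $\vec{f}-\Phi_{\epsilon}(\vec{f})$ into the $|k|>N$ tail, the off-$\mathcal{D}$ part, and the on-$\mathcal{D}$ oscillation part, and the same reduction to Lemma 6 of \cite{ccp}. There are, however, two execution-level differences worth recording. First, for the oscillation term the paper re-chooses $\varrho$ and $\sigma$ so that conditions (c) and (d) hold at level $\epsilon/(2J)$ (its (4.7)--(4.8)), and then bounds $\int_{B(y,r)}(\cdots)^{p/2}w$ by summing over the $J$ pieces $B(y,r)\cap Q_i$, each estimated against $\{w(B(y,r))\}^{\lambda}$; you instead apply the monotonicity of $\|\cdot\|_{L^{p,\lambda}(\mathbb{R}^n,\,w)}$ directly to the pointwise bound by $\mathcal{O}_h+\mathcal{O}_s$, which needs no factor of $J$ and only the level-$\epsilon$ versions of (c) and (d). Your route is actually cleaner: in the paper the grid is fixed before $\varrho,\sigma$ are re-chosen at level $\epsilon/(2J)$, and since the pointwise oscillation bound requires ${\rm diam}(Q_i)\leq\varrho$ and $|I_j|\leq\sigma$, re-shrinking these parameters after the grid (and hence $J$) is fixed creates an ordering tension that your argument simply avoids. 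Second, for the pre-compactness of $\Phi_{\epsilon}(\mathcal{G})$ the paper asserts boundedness of $\Phi_{\epsilon}$ on the Morrey-type space (by adapting the interpolation argument of Lemma \ref{l3.4}), while you exploit that the range of $\Phi_{\epsilon}$ is finite-dimensional and bound the coefficients $m_{Q_i\times I_j}(f_k)$ by Cauchy--Schwarz in $t$ and weighted H\"older in $x$, using $w\in A_p(\mathbb{R}^n)$ and condition (a); this is more elementary and fully sufficient, since a bounded subset of a finite-dimensional subspace is pre-compact. Both differences are improvements in clarity rather than changes of strategy.
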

\begin{proof}The proof is similar to the proof of Lemma \ref{l3.4}, and so we only give the outline here. It suffices to prove that, for each fixed $\epsilon>0$, there exists a
$\delta=\delta_{\epsilon}>0$ and a mapping $\Phi_{\epsilon}$ on
$L^{p,\,\lambda}(L^2([1,\,2]),\,l^{2};\,\mathbb{R}^n,\,w)$, such that
$\Phi_{\epsilon}(\mathcal{G})=\{\Phi_{\epsilon}(\vec{f}):\,\vec{f}\in
\mathcal G\}$ is a strongly pre-compact set in
 $L^{p,\,\lambda}(L^2([1,\,2]),\,l^{2};\,\mathbb{R}^n,\,w)$, and  for any $\vec{f}$, $\vec{g}\in \mathcal{G}$,
$$\|\Phi_{\epsilon}(\vec{f})-\Phi_{\epsilon}(\vec{g})\|_{L^{p,\,\lambda}(L^2([1,\,2]),\,l^{2};\,\mathbb{R}^n,\,w)}<\delta\Rightarrow
\|\vec{f}-\vec{g}\|_{L^{p,\,\lambda}(L^2([1,\,2]),\,l^{2};\,\mathbb{R}^n,\,w))}<8\epsilon.$$

For fixed $\epsilon>0$, we choose  $A>1$
large enough as in assumption (b),  and $N\in\mathbb{N}$ such that for all
$\{f_k\}_{k\in\mathbb{Z}}\in \mathcal {G}$,
$$\Big\|\Big(\int^2_1\sum_{|k|>N}|f_k(\cdot,\,t)|^2{\rm d}t\Big)^{\frac{1}{2}}\chi_{B(0,\,2A)}\Big\|_{L^{p,\,\lambda}(\mathbb{R}^n,\,w)}<\epsilon.$$
Let $Q$, $Q_1,\,\dots,\,Q_J$, $\mathcal{D}$,
$I_1,\,\dots,\,I_L\subset [1,\,2]$, and $\Phi_{\epsilon}$ be the same as in the proof of Lemma \ref{l3.2}. For such fixed $N$, let $\varrho$ and $\sigma\in (0,\,1/2)$ small enough such that for all
$\vec{f}=\{f_k\}_{k\in\mathbb{Z}}\in \mathcal{G}$,
\begin{eqnarray}\Big\|\sup_{|h|\leq \varrho}\Big(\int^2_1
\sum_{|k|\leq N}|f_k(\cdot,\,t)-f_k(\cdot+h,\,t)|^2{\rm
d}t\Big)^{\frac{1}{2}}\Big\|_{L^{p,\,\lambda}(\mathbb{R}^n,\,w)}< \frac{\epsilon}{2J};\end{eqnarray}
\begin{eqnarray}\Big\|\sup_{|s|\leq \sigma}\Big(\int^2_1
\sum_{|k|\leq N}|f_k(\cdot,\,t+s)-f_k(\cdot,\,t)|^2{\rm
d}t\Big)^{\frac{1}{2}}\Big\|_{L^{p,\lambda}(\mathbb{R}^n,\,w)}< \frac{\epsilon}{2J},\end{eqnarray}
We can verify that $\Phi_{\epsilon}$ is bounded on $L^{p,\,\lambda}(L^2([1,\,2]),\,l^{2};\,\mathbb{R}^n,\,w)$, and consequently,
$\Phi_{\epsilon}(\mathcal{G})=\{\Phi_{\epsilon}(\vec{f}):\,
\vec{f}\in \mathcal{G}\}$ is a strongly pre-compact set in
$L^{p,\lambda}(L^2([1,\,2]),l^{2};\,\mathbb{R}^n,w)$. Recall that for $x\in Q_i$ with $1\leq i\leq J$,
\begin{eqnarray*}&&\Big\{\int^2_1\sum_{|k|\leq
N}\big|f_k(x,\,t)-\sum_{v=1}^Lm_{Q_i\times
I_v}(f_k)\chi_{I_v}(t)\big|^2{\rm
d}t\Big\}^{\frac{1}{2}}\\
&&\quad\lesssim\sup_{|h|\leq \varrho}\Big(\int^2_1
\sum_{|k|\leq N}|f_k(x,\,t)-f_k(x+h,\,t)|^2{\rm
d}t\Big)^{\frac{1}{2}}\\
&&\qquad+\sup_{|s|\leq \sigma}\Big(\int^2_1
\sum_{|k|\leq N}|f_k(x,\,t+s)-f_k(x,\,t)|^2{\rm
d}t\Big)^{\frac{1}{2}}.
\end{eqnarray*} For a ball
$B(y,\,r)$,  a trivial computation involving (4.7) and (4.8), leads to that
\begin{eqnarray*}
&&\int_{B(y,\,r)}\Big(\int^2_1\sum_{|k|\leq
N}\big|f_k(x,\,t)\chi_{\mathcal{D}}-\sum_{i=1}^{J}\sum_{j=1}^{L}m_{Q_i\times
I_j}
(f_k)\chi_{Q_i\times I_j}(x,t)\big|^2{\rm d}t\Big)^{\frac{p}{2}}w(x){\rm d}x\\
&&\quad=\sum_{i=1}^J\int_{B(y,\,r)\cap Q_i}\Big(\int^2_1\sum_{|k|\leq
N}\big|f_k(x,\,t)-\sum_{i=1}^{J}m_{Q_i\times
I_j}
(f_k)\chi_{I_j}(t)\big|^2{\rm d}t\Big)^{\frac{p}{2}}w(x){\rm d}x\\
&&\quad\lesssim \epsilon\{w(B(y,\,r))\}^{\lambda}.
\end{eqnarray*}
Therefore,
\begin{eqnarray*}&&\int_{B(y,\,r)}\big\|\vec{f}\chi_{\mathcal{D}}-\Phi_{\epsilon}(\vec{f})
\big\|_{L^2([1,\,2]),\,l^{2})}^pw(x)
{\rm d}x\\
&& \lesssim
\int_{B(y,\,r)}\Big(\int^2_1\sum_{|k|\leq
N}\big|f_k(x,\,t)\chi_{\mathcal{D}}-\sum_{i=1}^{J}\sum_{j=1}^{L}m_{Q_i\times
I_j}
(f_k)\chi_{Q_i\times I_j}(x,t)\big|^2{\rm d}t\Big)^{\frac{p}{2}}w(x){\rm d}x\\
&&\quad+\int_{B(y,\,r)}\Big(\int^2_1\sum_{|k|>N}\big|f_k(x,\,t)\big|^2\Big)^{p/2}\chi_{B(0,\,2A)}(x)
w(x){\rm d}x\\
&&\lesssim 2\epsilon\{w(B(y,\,r))\}^{\lambda}.
\end{eqnarray*}
It then follows from the  assumption (b)  that for all $\vec{f}\in
\mathcal{G}$,
\begin{eqnarray*}
\|\vec{f}-\Phi_{\epsilon}(\vec{f})\|_{L^{p,\,\lambda}(L^2([1,\,2]),\,l^{2};\,\mathbb{R}^n,\,w)}&\leq &
\big\|\vec{f}\chi_{\mathcal{D}}-\Phi_{\epsilon}(\vec{f})
\big\|_{L^p(L^2([1,\,2]),\,l^{2};\,\mathbb{R}^n,\,w)}+\epsilon\\
&<&3\epsilon, \end{eqnarray*}and
$$\|\vec{f}-\vec{g}\|_{L^{p,\,\lambda}(\mathbb{R}^n)}< 6\epsilon+\|\Phi_{\epsilon}(f)-\Phi_{\epsilon}(\vec{g})\|_{L^{p,\,\lambda}(\mathbb{R}^n)}.$$
This completes the proof of Lemma \ref{l4.2}.
\end{proof}
{\it Proof of Theorem \ref{t1.3}}. We only consider the case of $p\in (q',\,\infty)$, $w\in A_{p/q'}(\mathbb{R}^n)$ and $\lambda\in (0,\,1)$. Recall that $\mathcal{M}_{\Omega,b}$ is bounded on $L^{p}(\mathbb{R}^n,\,w)$. Thus, by Lemma 4.2, we know that $\mathcal{M}_{\Omega,\,b}$ is bounded on $L^{p,\,\lambda}(\mathbb{R}^n,\,w)$. Thus, it suffices to prove that for $b\in C^{\infty}_0(\mathbb{R}^n)$, $\mathcal{M}_{\Omega,\,b}$ is completely continuous on $L^{p,\,\lambda}(\mathbb{R}^n,\,w)$.

Let $j_0\in\mathbb{Z}_-$, $b\in
C^{\infty}_0(\mathbb{R}^n)$ with ${\rm supp}\, b\subset B(0,\,R)$ and $\|b\|_{L^{\infty}(\mathbb{R}^n)}+\|\nabla
b\|_{L^{\infty}(\mathbb{R}^n)}=1.$ Let $\widetilde{K^j}(z)=\frac{|\Omega(z)|}{|z|^n}\chi_{\{2^{j-1}\leq |z|\leq 2^{j+2}\}}(z).$
By the Minkowski inequality,
\begin{eqnarray*}\Big(\int^2_1\sum_{j\in\mathbb{Z}}|F_{j,\,b}^{l}f(x,\,t)|^2{\rm d}t\Big)^{\frac{1}{2}}&\leq & \Big(\sum_{j\in\mathbb{Z}}\int^2_1|F_{j,\,b}^{l}f(x,\,t)|^2{\rm d}t\Big)^{\frac{1}{2}}\\
&\lesssim&\sum_{j\in\mathbb{Z}}\int_{\mathbb{R}^n}\widetilde{K^j}*\phi_{j-l}(x-y)|f(y)|\,{\rm d}y.
\end{eqnarray*}
It is obvious that ${\rm supp}\,\widetilde{K^j}*\phi_{j-l}\subset \{x:\, 2^{j-3}\leq |x|\leq 2^{j+3}\}$, and for any $R>0$,
$$\int_{R\leq |x|\leq 2R}\Big|\sum_{j\in\mathbb{Z}}\widetilde{K^j}*\phi_{j-l}(x)\Big|^q{\rm d}x\leq \sum_{j:\, 2^j\approx R}\|\widetilde{K^j}*\phi_{j-l}\|_{L^q(\mathbb{R}^n)}^q\lesssim R^{-nq+n}.$$
Let $\epsilon>0$. We deduce from  Lemma \ref{l4.1} and the inequality (3.7) that,  there exists a constant $A>0$, such that for all $f\in L^p(\mathbb{R}^n)$,
\begin{eqnarray}\quad\Big\|\Big(\int^2_1\sum_{j\in\mathbb{Z}}|F_{j,\,b}^{l}f(x,\,t)|^2{\rm d}t\Big)^{\frac{1}{2}}\chi_{\{|\cdot|>A\}}(\cdot)\Big\|_{L^{p,\,\lambda}(\mathbb{R}^n,\,w)}<\epsilon\|f\|_{L^{p,\,\lambda}(\mathbb{R}^n,\,w)}.
\end{eqnarray}
Recall that $\widetilde{\mathcal{M}}_{\Omega}^{l,\,j_0}$ is bounded on $L^{p,\,\lambda}(\mathbb{R}^n,\,w)$. For $r>1$ small enough, $M_r$ is also bounded on $L^{p,\,\lambda}(\mathbb{R}^n,\,w)$ (see \cite{ks}).  Thus by (3.4), we know that there exists  a constant $\varrho>0$, such that
\begin{eqnarray}&&
\Big\|\sup_{|h|\leq \varrho}\Big(\int^2_1
\sum_{j>j_0}|F_{j,\,b}^{l}f(\cdot,t)-F_{j,\,b}^{l}f(\cdot+h,t)|^2{\rm
d}t\Big)^{\frac{1}{2}}\Big\|_{L^{p,\lambda}(\mathbb{R}^n,\,w)}\\
&&\quad\lesssim \epsilon\|f\|_{L^{p,\lambda}(\mathbb{R}^n,\,w)}.\nonumber\end{eqnarray}
It follows from Lemma \ref{l4.1}, estimate (3.5) that
for each $N\in\mathbb{N}$, there exists a
constant $\sigma\in (0,\,1/2)$ such that
\begin{eqnarray}&&\Big\|\sup_{|s|\leq \sigma}\Big(\int^2_1
\sum_{|j|\leq
N}|F_{j,\,b}^{l}f(\cdot,\,s+t)-F_{j,\,b}^{l}f(\cdot,\,t)|^2{\rm
d}t\Big)^{\frac{1}{2}}\Big\|_{L^{p,\lambda}(\mathbb{R}^n,w)}\\
&&\quad< \epsilon\|f\|_{L^{p,\lambda}(\mathbb{R}^n,w)}.\nonumber\end{eqnarray}
We also obtain by Lemma \ref{l4.1} and (3.6) that
for each fixed $D>0$, there exists $N\in\mathbb{N}$
such that
\begin{eqnarray}\Big\|\Big(\int^2_1\sum_{j>N}|F_{j,\,b}^{l}f(\cdot,\,t)|^2{\rm d}t\Big)^{\frac{1}{2}}\chi_{B(0,\,D)}\Big\|_{L^{p,\lambda}(\mathbb{R}^n,\,w)}<\epsilon\|f\|_{L^{p,\lambda}(\mathbb{R}^n,\,w)}.\end{eqnarray}
The inequalities (4.9)-(4.12), via Lemma 4.2, tell us for any  $j_0\in \mathbb{Z}_-$, the operator $\mathcal{F}^{l}_{j_0}$ defined by (3.18) is compact from $L^{p,\,\lambda}(\mathbb{R}^n,\,w)$ to
$L^{p,\,\lambda}(L^2([1,\,2]),\,l^2;\,\mathbb{R}^n,\,w)$.
On the other hand, by Lemma \ref{l4.1}, Theorem \ref{t2.2} and Lemma \ref{l3.2}, we know that
$$\|\widetilde{\mathcal{M}}_{\Omega}f-\widetilde{\mathcal{M}}_{\Omega}^{l}f\|_{L^{p,\,\lambda}(\mathbb{R}^n,\,w)}\lesssim
2^{-\varepsilon\varrho_pl}\|f\|_{L^{p,\,\lambda}(\mathbb{R}^n,\,w)},$$
and
$$\big\|\widetilde{\mathcal{M}}_{\Omega,\,b}^{l,\,j_0}f-\widetilde{\mathcal{M}}_{\Omega,\,b}^lf\big\|_{L^{p,\,\lambda}(\mathbb{R}^n,\,w)}\lesssim 2^{\varepsilon j_0}\|f\|_{L^{p,\,\lambda}(\mathbb{R}^n,\,w)}.$$
As it was shown in the proof of Theorem \ref{t1.2},   we can deduce from the last facts that $\mathcal{M}_{\Omega,\,b}$ is completely continuous on $L^{p,\,\lambda}(\mathbb{R}^n,\,w)$ when $b\in C_0^{\infty}(\mathbb{R}^n)$. This completes the proof of Theorem \ref{t1.3}.
\qed

\end{document}